\newtheorem{Theorem}{\quad Theorem}[section]
\newtheorem{Lemma}[Theorem]{\quad Lemma} 
\newtheorem{example}[Theorem]{Example} 
\newtheorem{proposition}{Proposition}
\DeclareFontFamily{U}{tipa}{}
\DeclareFontShape{U}{tipa}{bx}{n}{<->tipabx10}{}
\newcommand{\arc@char}{{\usefont{U}{tipa}{bx}{n}\symbol{62}}}%
\newcommand{\arc}[1]{\mathpalette\arc@arc{#1}}
\newcommand{\abs}[1]{\left|#1\right|}     
\newcommand{\arc@arc}[2]{%
  \sbox0{$\m@th#1#2$}%
  \vbox{
    \hbox{\resizebox{\wd0}{\height}{\arc@char}}
    \nointerlineskip
    \box0
  }%
}
\begin{document}

\title[Isomorphisms-Dilations of the skew-fields]{Isomorphic-Dilations of the skew-fields constructed\\ over parallel lines in the Desargues affine plane}

\author[Orgest ZAKA]{Orgest ZAKA}
\address{Orgest ZAKA: 
Department of Mathematics, Faculty of Technical Science,
University of Vlora “Ismail QEMALI”, Vlora, Albania}
\email{orgest.zaka@univlora.edu.al, gertizaka@yahoo.com}

\author[James F. Peters]{James F. Peters}
\address{James F. PETERS: 
Computational Intelligence Laboratory,
University of Manitoba, WPG, MB, R3T 5V6, Canada and
Department of Mathematics, Faculty of Arts and Sciences, Ad\.{i}yaman University, 02040 Ad\.{i}yaman, Turkey}
\thanks{The research has been supported by the Natural Sciences \&
Engineering Research Council of Canada (NSERC) discovery grant 185986, Instituto Nazionale di Alta Matematica (INdAM) Francesco Severi, Gruppo Nazionale  per le Strutture Algebriche, Geometriche e Loro Applicazioni grant 9 920160 000362, n.prot U 2016/000036 and Scientific and Technological Research Council of Turkey (T\"{U}B\.{I}TAK) Scientific Human
Resources Development (BIDEB) under grant no: 2221-1059B211301223.}
\email{James.Peters3@umanitoba.ca}

\dedicatory{Dedicated to  David Hilbert and Girard Desargues}

\subjclass[2010]{51-XX; 51Axx; 51A30; 51E15}

\begin{abstract}
This paper considers dilations and translations of lines in the Desargues affine plane.   A \emph{dilation} of a line transforms each line into a parallel line
whose length is a multiple of the length of the original line.   In addition to the usual Playfair axiom for parallel lines in an affine plane, further conditions are given for distinct lines to be parallel in the Desargues affine plane.   This paper introduces the dilation of parallel lines in a finite Desargues affine plane that is a bijection of the lines.  Two main results are given in this paper, namely,  each dilation in a finite Desarguesian plane is an isomorphism between skew fields constructed over isomorphic lines and each dilation in a finite Desarguesian plane occurs in a Pappian space.
\end{abstract}
\keywords{Skew-field, Desargues affine plane, Dilatation}

\maketitle

\section{Introduction} 
The foundations for the study of the connections between axiomatic geometry and algebraic structures were
set forth by Hilbert~\cite{Hilbert1959geometry}, recently elaborated and extended in terms of the algebra of affine planes in, for example,~\cite{Kryftis2015thesis},~\cite[\S IX.3, p. 574]{Berger2010geometryRevealed}(affine transformation of the plane transforms a lattice into a lattice) and in the Desargues affine plane (see, {\em e.g.},~\cite{1,2,3,4,5,7,8}).
Properties of geometric structures such as adjacency and proximity~\cite[\S III.XVIII, 193]{PSAleksandrov1956affineGeometry}~\cite{Peters2016AMSJphysicalGeometry,Peters2018JMSMvortexNerves} are preserved by affine transformations~\cite[\S III.XVII, 130ff]{ADAleksandrov1956nonEuclideanGeometry}. 
This paper focuses on the algebra arising from dilatations and translations entirely in the Desargues affine
plane.

In this paper, we consider dilatations and translations entirely in the Desargues affine
plane.  We highlight the case when for a fixed
point of dilatations it is not in any of the lines on which we have
constructed the skew-field (the case when the fixed point is the point of
the line where we constructed the skew-field is exhausted in the paper \cite%
{1}). 

In previous work~\cite{2}, we have shown how we can transform
a line in the Desargues affine plane into an additive Group of its points. 
We have also shown~\cite{3} how to construct a skew-field with a set of
points on a line in the Desargues affine plane.  In addition, for a line of in any 
Desargues affine plane, we construct a skew-field with the
points of this line, by appropriately defined addition and multiplication of points in a line.

The main result in this paper is that each dilation is an isomorphism between skew fields 
constructed over isomorphic lines (see Theorem~\ref{thm:mainResult})..



\begin{figure}[!ht]
\centering
\begin{pspicture}
(-0.5,-1.8)(4,3)
\psline[linestyle=solid](0,-1.5)(0,2.5)\psline[linestyle=solid](2,-1.5)(2,2.5)
\psline[linestyle=solid](4,-1.5)(4,2.5)
\psline[linestyle=solid](0,-1)(2,0)(4,-1) 
\psline[linestyle=dotted, , linewidth=1.2pt,linecolor = red](0,1)(4,1) 
\psline[linestyle=solid](0,1)(2,2)(4,1) 
\psline[linestyle=dotted, , linewidth=1.2pt,linecolor = red](0,-1)(4,-1) 
\psdots[dotstyle=o, linewidth=1.2pt,linecolor = black, fillcolor = yellow]%
(0,1)(0,-1)(2,2)(2,1)(2,0)(2,-1)(4,1)(4,-1)
\rput(0,2.7){$\boldsymbol{\ell_k}$}\rput(2,2.7){$\boldsymbol{\ell_l}$}\rput(4.0,2.7){$\boldsymbol{\ell_m}$}
\rput(-0.25,1){$\boldsymbol{A'}$}\rput(2.25,2.25){$\boldsymbol{B'}$}\rput(4.25,1){$\boldsymbol{C'}$}
\rput(-0.25,-1){$\boldsymbol{A}$}\rput(2.25,0.25){$\boldsymbol{B}$}\rput(4.25,-1){$\boldsymbol{C}$}
\rput(1,-0.2){$\boldsymbol{\ell_{AB}}$}\rput(1,1.8){$\boldsymbol{\ell_{A'B'}}$}
\rput(3,1.8){$\boldsymbol{\ell_{B'C'}}$}\rput(3,-0.2){$\boldsymbol{\ell_{BC}}$}
\rput(2.45,-0.8){$\boldsymbol{\ell_{AC}}$}\rput(2.55,1.3){$\boldsymbol{\ell_{A'C'}}$}
\end{pspicture}
\caption[]{Parallel lines in Desargues Axiom: $\boldsymbol{\ell_{AC}\parallel \ell_{A'C'}}$}
\label{fig:1-path}
\end{figure}
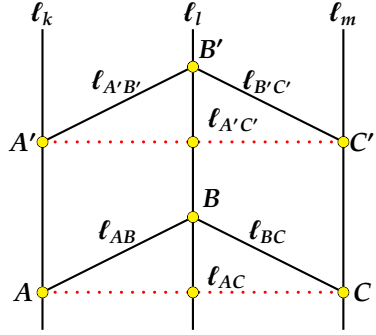

\section{Preliminaties}
Let $\mathcal{P}$ be a nonempty space and let $2^{\pi}$ be a collection of subsets of $\mathcal{P}$ (denoted by $\mathcal{L}$).
The elements $p$ of $\mathcal{P}$ are points and an element $\ell$ of $\mathcal{L}$ is a line.   We write
$P\in \mathcal{P}$ and $P\in \ell$, interchangeably.    The geometric structure $\left(\mathcal{P},\mathcal{L}\right)$ is an \emph{\bf affine plane}, provided\\

\begin{compactenum}[1$^o$]
\item For each $\left\{P,Q\right\}\in \mathcal{P}$, there is exactly one $\ell\in \mathcal{L}$ such that $\left\{P,Q\right\}\in \ell$.

\item For each $P\in \mathcal{P}, \ell\in \mathcal{L}, P \not\in \ell$, there is exactly one $\ell'\in \mathcal{L}$ such that
$P\in \ell'$ and $\ell\cap \ell' = \emptyset$\ (Playfair Parallel Axiom~\cite{Pickert1973PlayfairAxiom}).   Put another way,
if $P\not\in \ell$, then there is a unique line $\ell'$ on $P$ missing $\ell$~\cite{Prazmowska2004DemoMathDesparguesAxiom}.

\item There is a 3-subset $\left\{P,Q,R\right\}\in \mathcal{P}$, which is not a subset of any $\ell$ in the plane.   Put another way,
there exist three non-collinear points $\mathcal{P}$~\cite{Prazmowska2004DemoMathDesparguesAxiom}.
\end{compactenum}

\emph{\bf Desargues' Axiom}~\cite[\S 3.9, pp. 60-61]            {Kryftis2015thesis}~\cite{Szmielew1981DesarguesAxiom}.   Let $A,B,C,A',B',C'\in \mathcal{P}$ and let pairwise distinct lines  $\ell_k , \ell_l, \ell_m, \ell_{AC}, \ell_{A'C'}\in \mathcal{L}$ such that
\begin{align*}
\ell_k \parallel \ell_l \parallel \ell_m &\ \mbox{and}\  \ell_A\parallel \ell_{A'}\ \mbox{and}\ \ell_{C}\parallel \ell_{C'}.\\
A,B\in \ell_{AB}, A'B'\in \ell_{A'B'}, &\ \mbox{and}\ B,C\in \ell_{BC}, B'C'\in \ell_{B'C'}.\\
A\neq C, A'\neq C', &\ \mbox{and}\ \ell_{AB}\neq \ell_{l}, \ell_{BC}\neq \ell_{l}.
\end{align*}
Then $\boldsymbol{\ell_{AC}\parallel \ell_{A'C'}}$.   \qquad \textcolor{blue}{$\blacksquare$}

\begin{example}
The parallel lines  $\ell_{AC}, \ell_{A'C'}\in \mathcal{L}$ in Desargues' Axiom are represented in Fig.~\ref{fig:1-path}.  In other words, the base of $\bigtriangleup ABC$ is parallel with the base of $\bigtriangleup A'B'C'$, provided the restrictions on the points and lines in Desargues' Axiom are satisfied.
\qquad \textcolor{blue}{$\blacksquare$}
\end{example}

\noindent A {\bf Desargues affine plane} is an affine plane that satisfies Desargues' Axiom. 

An \emph{\bf affine space} $\mathcal{P}$ is a nonempty set endowed with structure by the prescription of a real vector space $\vec{\mathcal{P}}$ together with an injective and transitive action of the additive group of $\vec{\mathcal{P}}$ on $\mathcal{P}$.   The vector space $\vec{\mathcal{P}}$ is the translation space of $\mathcal{P}$~\cite{NollSchaffer1977affineIsomorphisms}. 

Let $\mathcal{P},\mathcal{P}'$ be a pair of affine spaces, with associated vector spaces $\vec{\mathcal{P}},\vec{\mathcal{P}'}$.   A mapping $f: \mathcal{P}\longrightarrow \mathcal{P}'$ is affine~\cite{DolgachevShirokov1995affineMap}, provided there is a linear mapping $\phi:\vec{\mathcal{P}}\longrightarrow \vec{\mathcal{P}'}$ such that
\[
f(P + \vec{l}) = f(P) + \phi(\vec{v})\ \mbox{for all}\ P\in \mathcal{P}, \vec{v}\in \vec{\mathcal{P}}.
\]

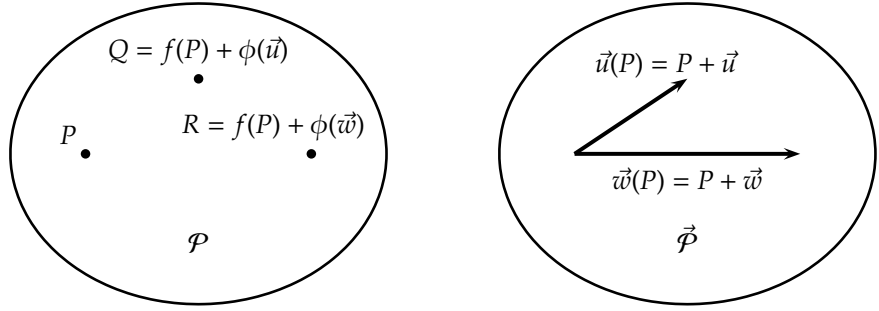
\begin{figure}[!ht]
\centering
    \begin{pspicture}
		(-1,-0.3)(12,4)
    \parametricplot[linewidth=1pt,plotstyle=ccurve]%
    {0}{360}{2.5 t cos 2.5 mul add 2 t sin 2 mul add}
    \parametricplot[linewidth=1pt,plotstyle=ccurve]%
    {0}{360}{9 t cos 2.5 mul add 2 t sin 2 mul add}
    \psline[linewidth=1.5pt]{->}(7.5,2)(10.5,2)
    \psline[linewidth=1.5pt]{->}(7.5,2)(9,3)
    \psdots[dotstyle=*,dotscale=1.0](1,2)
    \psdots[dotstyle=*,dotscale=1.0](2.5,3)
    \psdots[dotstyle=*,dotscale=1.0](4,2)
    \uput[90](2.5,0.5){$\mathcal{P}$}
	  \uput[90](9,0.5){$\vec{\mathcal{P}}$}
    \uput[135](1,2){$P$}
    \uput[90](2.5,3){$Q = f(P) + \phi(\vec{u})$}
    \uput[90](3.5,2){$R = f(P) + \phi(\vec{w})$}
    \uput[0](7.6,3.2){$\vec{u}(P)=P+\vec{u}$}
    \uput[-90](9,2){$\vec{w}(P)=P+\vec{w}$}
    \end{pspicture}
	\caption[]{Affine plane $\mathcal{P}\longmapsto\ \mbox{(associated vector space)}\ \vec{\mathcal{P}}$}
	\label{fig:affineMap}
\end{figure}

\begin{example}
Let $P\in \mathcal{P}, \vec{u}\in \vec{\mathcal{P}}$,  then $\vec{u}(P) = P + \vec{v}$.   A sample affine mapping of $\mathcal{P}$ to $\vec{\mathcal{P}}$ with respect to the point $P$ is shown in Fig.~\ref{fig:affineMap}.  
\qquad \textcolor{blue}{$\blacksquare$}
\end{example}

A space $\left(\mathcal{P},\mathcal{L}\right)$ is Pappian, provided

\begin{Theorem}{\rm [{\bf Pappus}]}.\\
Let $P_1,P_2.P_3, P_4,P_5,P_6\in \mathcal{P}$ and $\abs{\ell_{P_1P_3}\cap \ell_{P_2P_3}} = 1$, then 
$\ell_{P_3Q_4}\parallel \ell_{P_1P_6}$
\end{Theorem}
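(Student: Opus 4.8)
The statement as printed is elliptical (the parallelism hypotheses and the point $Q_4$ are not spelled out); I read it as the affine form of Pappus' Theorem: there are two distinct lines $g,h$ --- with, say, $P_1,P_3,P_5$ on $g$ and $P_2,P_4,P_6$ on $h$, the condition $\abs{\ell_{P_1P_3}\cap\ell_{P_2P_3}}=1$ recording that the configuration is non-degenerate --- one is given two parallelisms among the ``cross-join'' lines $\ell_{P_iP_j}$ (with $P_i\in g$, $P_j\in h$), and one must conclude that the remaining cross-join pair is parallel. The plan is to convert the whole incidence configuration into an identity inside the skew-field carried by one of the lines, and then to use finiteness of the plane to force that skew-field to be commutative.

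First I would coordinatize. By the construction of~\cite{3}, choose on the line $g$ a zero point, a unit point, and a reference direction so that $g$ becomes a skew-field $K=(g,+,\cdot)$ and $P_1,P_3,P_5$ acquire coordinates $a_1,a_2,a_3\in K$; then transport this structure, via the parallel-projection bijections supplied by the Playfair axiom, to coordinatize $h$ compatibly, so that $P_2,P_4,P_6$ are described by coordinates $b_1,b_2,b_3\in K$ together with the fixed ``slope'' datum relating $h$ to $g$. Second, I would render each parallelism algebraically: computing the direction of a join line in these coordinates --- which is exactly the computation underlying the addition and multiplication of points on a line, and is legitimate because Desargues' Axiom holds in the plane --- turns each of the two hypotheses into a monomial relation among the $a$'s and $b$'s, and turns the desired conclusion into a third such relation; the problem thereby becomes purely algebraic: derive the third relation from the first two.

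The crux is that this derivation calls for interchanging a product $a_ib_j$ with $b_ja_i$, which is not valid in a general skew-field --- and, correspondingly, Pappus' Theorem does fail in a general Desarguesian plane. Here, however, the plane is \emph{finite}, so $K$ is a finite division ring, and by Wedderburn's little theorem every finite division ring is commutative; hence $K$ is a field, the interchange is legitimate, the third relation follows, and the required parallelism holds. Equivalently, one may phrase the final step structurally: a Desarguesian affine plane is Pappian exactly when its coordinate skew-field is commutative, and Wedderburn supplies commutativity whenever the plane is finite. The step I expect to be the main obstacle is the second one --- the bookkeeping that pins down the coordinatization of $h$ relative to $g$ so that all three parallelism conditions really do collapse to monomial identities differing only by a single commutation; this is where the already-available Desargues' Axiom (well-definedness of $+$ and $\cdot$ on a line, and consistency of the parallel-projection identifications between the two lines) must be used with care.
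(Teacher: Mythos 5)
You should know at the outset that the paper contains no proof of this statement: the \emph{Theorem} environment labelled [Pappus] merely states the Pappus condition that \emph{defines} a Pappian space (and does so in a garbled form --- $Q_4$ is never introduced, and the parallelism hypotheses are missing), while the substantive fact the paper actually needs, namely that the condition holds in the planes under consideration, is quoted as Tecklenburg's theorem (``every finite Desarguesian affine plane is Pappian'') with a citation and no argument. Your reading of the statement is the only one under which it is a true theorem --- as literally printed, with no finiteness or Desargues hypothesis, it is not provable, and affine Pappus genuinely fails in infinite Desarguesian planes coordinatized by noncommutative skew-fields such as the quaternions --- and your proof is exactly the standard argument hiding behind the paper's citation: Desargues' Axiom yields a coordinatizing skew-field $K$ on a line, the Pappus configuration is equivalent to commutativity of $K$, and finiteness together with Wedderburn's little theorem forces $K$ to be a field. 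Indeed the two sentences the paper places just before Theorem~\ref{thm:Tecklenburg} (isomorphism with a coordinate plane over a skew-field; commutativity of finite division rings) are precisely your two steps, so you have not taken a different route so much as supplied the proof the paper outsources. The only genuine gap in your write-up is the one you flag yourself: the reduction of the two hypothesized parallelisms and the desired one to monomial identities differing by a single commutation is asserted, not carried out, and that reduction \emph{is} the content of the equivalence ``Pappus $\Leftrightarrow$ commutativity'' (Hessenberg-style coordinate computation). Either perform that slope computation explicitly in the skew-field coordinates of~\cite{3} or cite it; with that done, your argument is complete and consistent with the result the paper invokes.
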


Every Desarguesian affine plane is isomorphic to a coordinate plane over a field~\cite{Wedderburn1987PappusDesarguesianAffinePlanes} and every finite field is commutative~\cite[\S 3, p. 351]{Wedderburn1905DesarguesianAffinePlane} .   From this, we obtain

\begin{Theorem}\label{thm:Tecklenburg}{\rm [{\bf Tecklenburg}]}{\rm~\cite{Wedderburn1987PappusDesarguesianAffinePlanes}}.\\
Every finite Desarguesian affine plane is Pappian.
\end{Theorem}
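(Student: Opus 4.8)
\emph{Proof proposal.} The plan is to run the classical coordinatization argument, using as input precisely the two facts recalled just above: that a Desarguesian affine plane is coordinatized by a skew-field, and that a finite skew-field is commutative. First I would invoke the skew-field construction of~\cite{3} (recalled in the introduction): since $(\mathcal{P},\mathcal{L})$ satisfies Desargues' Axiom, fixing a line $\ell$ and a pair of distinct points on it and carrying out the point-addition and point-multiplication of our earlier work produces a skew-field $K$, and the plane is isomorphic, as an incidence structure, to the coordinate plane $\mathrm{AG}(2,K)$ whose points are the pairs $(x,y)\in K\times K$ and whose lines are the solution sets of equations of the form $y = xm + b$ together with the ``vertical'' lines $x = c$; under this isomorphism parallelism of lines corresponds to equality of slope. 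Because $\mathcal{P}$ is finite, the line $\ell$ carries only finitely many points, so $K$ is a \emph{finite} skew-field, and Wedderburn's little theorem (``every finite field is commutative'', as cited) forces $K$ to be a commutative field.

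The second step is to verify that the coordinate plane $\mathrm{AG}(2,K)$ over a commutative field $K$ satisfies Pappus' Axiom, and then to transport that conclusion back along the incidence isomorphism. Here I would take the affine Pappus configuration in its standard form — two distinct lines $g,h$ meeting in a single point, three points $P_1,P_3,P_5$ on $g$ and three points $P_2,P_4,P_6$ on $h$, with two of the ``Pappus'' joins parallel — and, after an affine change of coordinates (permissible since such changes are automorphisms of $\mathrm{AG}(2,K)$), place $g$ and $h$ conveniently, parametrize the six points by scalars from $K$, and translate each of the three parallelism conditions into an equation among those scalars. The conclusion (the third pair of joins is parallel) then follows from the first two equations by a short manipulation whose \emph{only} non-formal ingredient is the commutativity of multiplication in $K$. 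Pulling the resulting incidence statement back through the isomorphism establishes Pappus' Axiom in $(\mathcal{P},\mathcal{L})$, i.e.\ that the plane is Pappian.

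The genuinely content-bearing part of the argument — everything else being the assembly of two quoted theorems — is this last coordinate verification, and I expect it to be the main obstacle in two respects. One must set up the computation over a general skew-field, keeping careful track of the order of factors in the intermediate slope expressions, and only at the very end invoke commutativity as the single algebraic identity that makes the conclusion go through; and one must dispose of the degenerate placements of the configuration (coincident or specially positioned points), which are either handled by a separate easy case or excluded by the hypothesis that $g$ and $h$ meet in exactly one point. Once these bookkeeping points are dealt with, the theorem follows.
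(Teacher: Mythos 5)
Your proposal is correct and follows essentially the same route the paper takes: the paper justifies this theorem by exactly the two facts you use --- coordinatization of a Desarguesian affine plane over a (skew-)field and Wedderburn's theorem that every finite skew-field is commutative --- citing Tecklenburg rather than carrying out the Pappus verification in $\mathrm{AG}(2,K)$. Your sketch merely fills in that routine coordinate check, which the paper leaves to the cited reference.
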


%


\section{Main Result: Dilations of Lines in a Finite Desarguesian Plan as Isomorphisms of Skew-fields Constructed over the Lines}

In this section, we will give the main result in this article.

\begin{proposition}\label{prop:bijectionOfLines} {\cite{4}} 
For every two parallel lines $\ell _{1},\ell _{2}\in \mathcal{L}$
in Desargues affine plane, exist a dilatation in this plane, $\delta :\ell
_{1}\longrightarrow \ell _{2}.$ This dilatation $\delta $ is a bijection of
lines $\ell _{1}$ and $\ell _{2}.$
\end{proposition}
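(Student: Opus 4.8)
The plan is to construct the dilatation $\delta$ explicitly using a fixed point together with the parallel-projection structure that Desargues' Axiom guarantees, and then verify that the resulting map is a well-defined bijection taking $\ell_1$ onto $\ell_2$ and sending every line to a parallel line. First I would fix a center $O$ not lying on $\ell_1$ (the existence of such a point follows from axiom $3^o$, since not all points are collinear). For each point $P \in \ell_1$, the line $\ell_{OP}$ is the unique line on $O$ and $P$; I would define $\delta(P)$ to be the intersection point $\ell_{OP} \cap \ell_2$. The first substantive step is to show this intersection is nonempty and a single point: if $\ell_{OP}$ were parallel to $\ell_2$, then since $\ell_1 \parallel \ell_2$, transitivity of parallelism (which holds in any affine plane via the Playfair axiom) would force $\ell_{OP} \parallel \ell_1$; but $P \in \ell_{OP} \cap \ell_1$, so $\ell_{OP} = \ell_1$, contradicting $O \notin \ell_1$. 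Hence $\ell_{OP}$ meets $\ell_2$ in exactly one point by axiom $1^o$, so $\delta$ is well-defined as a map $\ell_1 \to \ell_2$.

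Next I would show $\delta$ is a bijection. For surjectivity, given $Q \in \ell_2$, the line $\ell_{OQ}$ is not parallel to $\ell_1$ (same transitivity argument, now using $O \notin \ell_2$, which holds because $O \notin \ell_1$ and $\ell_1 \parallel \ell_2$ with $\ell_1 \neq \ell_2$), so it meets $\ell_1$ in a unique point $P$ with $\delta(P) = Q$. For injectivity, if $\delta(P) = \delta(P') = Q$ with $P \neq P'$, then both $\ell_{OP}$ and $\ell_{OP'}$ contain $O$ and $Q$, forcing $\ell_{OP} = \ell_{OP'}$ by uniqueness in axiom $1^o$; this line then meets $\ell_1$ in the two distinct points $P, P'$, so it equals $\ell_1$, again contradicting $O \notin \ell_1$. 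Thus $\delta$ is a bijection.

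It remains to check that $\delta$ is genuinely a dilatation, i.e.\ that it carries each line of the plane to a parallel line (equivalently, for collinear triples the images are collinear on a parallel line). Here is where Desargues' Axiom does the real work: given $P, R \in \ell_1$ with images $P' = \delta(P)$, $R' = \delta(R)$, I would apply the Desargues configuration with center $O$ to the triangles determined by $P, R$ and $P', R'$ — the lines $\ell_{OP} \parallel$ none of the others but the two "side" lines through corresponding vertices are handled exactly by the parallelism conclusion $\ell_{AC} \parallel \ell_{A'C'}$ of the axiom, giving $\ell_{PR} \parallel \ell_{P'R'}$. Extending this to an arbitrary line $\ell$ of the plane (not just chords of $\ell_1$) uses that a dilatation is determined by its action on two points and that the line joining the images of any two points of $\ell$ is parallel to $\ell$. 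I expect this last step — pinning down precisely which instance of Desargues' Axiom applies and verifying all the genericity hypotheses (pairwise distinctness of the relevant lines, the $A \neq C$ type conditions) — to be the main obstacle, since degenerate sub-cases (e.g.\ when $P$, $R$, $O$ interact specially, or when $\ell$ passes through $O$) must be dispatched separately. Invoking Proposition statement's own reference~\cite{4} for the bijection property, the cleanest exposition is to establish well-definedness and bijectivity directly as above and to cite the Desargues-Axiom argument for the dilatation property, noting that the center $O$ can always be chosen off $\ell_1$ by axiom $3^o$.
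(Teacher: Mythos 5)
The paper does not prove this proposition at all---it is quoted from the cited reference \cite{4}---so your attempt can only be judged on its own terms, and there are two genuine gaps. First, your choice of center is not justified: from $O\notin\ell_1$ you conclude $O\notin\ell_2$, which is a non sequitur, since a point off $\ell_1$ may perfectly well lie on the parallel line $\ell_2$. You must choose $O$ off \emph{both} lines, and this is not always possible: in the affine plane of order $2$ the two parallel lines cover all four points, so no such center exists and your central-projection construction cannot even begin; in that situation the required dilatation is a translation, a case your argument never reaches. Second, and more fundamentally, the statement asserts the existence of a \emph{dilatation of the plane} restricting to a bijection $\ell_1\to\ell_2$, whereas you only construct a map defined on $\ell_1$ (a perspectivity from $O$) and then must show it is the trace of a dilatation. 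The Desargues step you sketch for this is vacuous: for $P,R\in\ell_1$ the line $\ell_{PR}$ is $\ell_1$ itself, so there are no triangles and the ``conclusion'' $\ell_{PR}\parallel\ell_{P'R'}$ is just $\ell_1\parallel\ell_2$, which you already had. The substantive content---defining the image of an arbitrary point $Q$ of the plane (say as $\ell_{OQ}\cap\ell'$, where $\ell'$ is the parallel through $\delta(P)$ to $\ell_{PQ}$ for some $P\in\ell_1$), proving independence of the auxiliary choices, and verifying that every line goes to a parallel line, all of which is exactly where Desargues' Axiom is used---is acknowledged as ``the main obstacle'' and then deferred to the very reference \cite{4} whose result you are trying to prove. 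That deferral makes the argument circular at its only nontrivial point.

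The bijectivity portion of your argument (well-definedness of $\ell_{OP}\cap\ell_2$, injectivity and surjectivity via the uniqueness axioms and transitivity of parallelism) is fine once a legitimate center exists and the dilatation's existence is granted; what is missing is precisely the existence statement. A complete proof would either invoke the standard fact that in a Desarguesian affine plane there is a dilatation with prescribed center $O$ sending a prescribed point $P_0$ to any prescribed point on $\ell_{OP_0}$ (and, for the order-$2$ or center-free situation, a translation sending a point of $\ell_1$ to a point of $\ell_2$), or prove that fact directly from the Desargues configuration---with nondegenerate triangles whose vertices do not all lie on $\ell_1$.
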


\begin{Lemma}\label{lemma:one}
Each dilation, with a fixed point, in a finite Desargues affine plane $\mathcal{A}_{\mathcal{D}}=(%
\mathcal{P},\mathcal{L},\mathcal{I})$ is an isomorphism's between
skew-fields constructed over isomorphic lines $\ell _{1},\ell _{2}\in 
\mathcal{L}$\ of that plane.
\end{Lemma}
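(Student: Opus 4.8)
The plan is to combine Proposition~\ref{prop:bijectionOfLines} with the construction of the skew-field on a line from~\cite{3} and show that the dilation map respects both operations. Let $\delta:\ell_1\longrightarrow\ell_2$ be a dilation with fixed point $O$, where (by the hypothesis we are told to emphasize) $O\notin\ell_1$ and $O\notin\ell_2$; by Proposition~\ref{prop:bijectionOfLines}, $\delta$ is already a bijection of the point sets of $\ell_1$ and $\ell_2$. On each line $\ell_i$ one fixes a "zero" point $O_i$ and a "unit" point $E_i$ and builds addition and multiplication of points geometrically (via parallels through auxiliary points, as in the cited construction). To get an isomorphism of skew-fields, I would first \emph{normalize}: since a dilation carries lines to parallel lines and preserves the relation "parallel," it is natural to choose $O_2=\delta(O_1)$ and $E_2=\delta(E_1)$, so that $\delta$ automatically matches the distinguished points. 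The core of the argument is then the two homomorphism identities
\[
\delta(A\oplus_{\ell_1}B)=\delta(A)\oplus_{\ell_2}\delta(B),\qquad
\delta(A\odot_{\ell_1}B)=\delta(A)\odot_{\ell_2}\delta(B),
\]
for all $A,B\in\ell_1$.

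For the additive identity, I would unwind the geometric definition of $A\oplus B$ on $\ell_1$: it is produced by drawing certain lines through $A$, $B$, $O_1$ parallel to a fixed auxiliary direction and intersecting an auxiliary line, then projecting back. A dilation with center $O$ sends each of these auxiliary lines to a parallel line and preserves incidence and intersection points (it is a collineation fixing $O$), so the entire parallelogram-type figure defining the sum is carried to the corresponding figure defining $\delta(A)\oplus\delta(B)$ on $\ell_2$. Hence the additive identity follows from the fact that $\delta$ commutes with "take the intersection of two lines" and "draw the parallel to a given line through a given point." The multiplicative identity is handled the same way: the product $A\odot B$ is defined by a configuration of parallels anchored at $O_1$ and $E_1$, and since $\delta(E_1)=E_2$ and $\delta(O_1)=O_2$, the image configuration is exactly the one computing $\delta(A)\odot\delta(B)$. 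Injectivity and surjectivity are free from Proposition~\ref{prop:bijectionOfLines}, so $\delta$ is a bijective homomorphism, i.e.\ an isomorphism of skew-fields; in particular $\ell_1$ and $\ell_2$ are isomorphic as skew-fields, which is the assertion.

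The main obstacle I anticipate is bookkeeping rather than conceptual: one must verify carefully that a dilation \emph{with a center $O$ lying off both lines} still induces a well-defined map on the algebraic structures, i.e.\ that the choice of auxiliary lines used in the $\oplus$ and $\odot$ constructions can be transported by $\delta$ without the image configuration degenerating (two lines that were distinct must stay distinct, intersections must remain single points). This is where Desargues' Axiom and the finiteness hypothesis enter — finiteness guarantees the plane is Pappian (Theorem~\ref{thm:Tecklenburg}), so the skew-fields are in fact fields and the constructions are unambiguous, and Desargues' Axiom is exactly what guarantees that the "parallel projection" steps behave functorially under the collineation $\delta$. A secondary point to check is independence of the construction from the chosen $O_i,E_i$, but once the normalization $O_2=\delta(O_1)$, $E_2=\delta(E_1)$ is in place this reduces to the standard fact that the skew-field structure on a line is determined up to isomorphism by that line, which is precisely the content imported from~\cite{3}.
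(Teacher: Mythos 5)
Your proposal is correct and follows essentially the same route as the paper: start from the bijection of Proposition~\ref{prop:bijectionOfLines}, take the zero and unit of $\ell_2$ to be the $\delta$-images of those of $\ell_1$, and transport the geometric addition and multiplication configurations by $\delta$ (which preserves parallelism, incidence and intersection points), invoking the independence of the auxiliary point; the paper simply carries this out explicitly, tracing the two algorithms step by step and using three-vertex/parallelogram similarity to identify $\delta(A+C)$ with $\delta(A)+\delta(C)$ and $\delta(A\ast C)$ with $\delta(A)\ast\delta(C)$. The only inaccuracy is your side remark that finiteness/Pappus (Theorem~\ref{thm:Tecklenburg}) is what makes the constructions unambiguous -- that theorem plays no role in this lemma; the well-definedness comes from the independence-of-auxiliary-point results cited from the earlier papers.
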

\begin{proof}
Consider a dilatation $\delta$ with a fixed point $V$ in a finite Desargues affine plane. The case when this dilatation leads us to a
line in itself is covered in~\cite{1}, where the fixed point for dilatation is on the lines.


Consider a dilation $\delta :\ell _{1}\longrightarrow \ell _{2}$ and a
fixed point $V$ related to dilatation $\delta $ of the Desargues affine plane
such that $V\notin \ell _{1}$ and \ $V\notin \ell _{2}.$  From properties of
dilatations, we know that \ $\delta \left( \ell _{1}\right) =\ell _{2}\parallel
\ell _{1}.$ \ We mark \ $\mathbf{K}_{1}=$\ $\left( \ell _{1},+,\ast \right) $%
\ the skew-field constructed over the line $\ell _{1}$ and $\mathbf{K}_{2}=$%
\ $\left( \ell _{2},+,\ast \right) $\ the skew-field constructed over the
line $\ell _{2}.$

From Prop.~\ref{prop:bijectionOfLines}, we
have that \ $\delta :\ell _{1}\longrightarrow
\ell _{2}$ is a bijection. \ Now we see dilation $\delta $ as a function \ 
$\delta :\mathbf{K}_{1}=\ \left( \ell _{1},+,\ast \right) \longrightarrow 
\mathbf{K}_{2}=$\ $\left( \ell _{2},+,\ast \right) $ namely, as a function \ 
$\delta :\mathbf{K}_{1}\longrightarrow \mathbf{K}_{2}.$ Let's show that $%
\delta $ \ is an isomorphism of these two skew-fields. \ The dilation $%
\delta $ is a bijection of lines $\ell _{1}$ and $\ell _{2}.$ Let us prove
that $\delta $, is a homomorphism between the skew-fields \ $\mathbf{K}_{1}\ 
$and $\mathbf{K}_{2},$ so we have to show that:%

$$
\forall A,C\in \ell _{1},%
\begin{array}{l}
\mathbf{1.}\delta \left( A+C\right) =\delta \left( A\right) +\delta \left(
C\right). \\ 
\mbox{} \\
\mathbf{2}.\delta \left( A\ast C\right) =\delta \left( A\right) \ast \delta
\left( C\right).%
\end{array}%
$$
$\mathbf{Case.1.}$. For points $A,C\in \ell _{1},$ have $\delta \left(
A\right) ,\delta \left( C\right) \in \ell _{2}.$We show firstly that: 
$$
\delta \left( A+C\right) =\delta \left( A\right) +\delta \left( C\right) .
$$
During the construction of the skew-field over a line of Desargues affine
plane, we choose two points (each affine plan has at least two points), which
we write with $\mathbf{O}$ and $\mathbf{I}$ and call them \textbf{zero} and 
\textbf{one} respectively, these points play the role of \textbf{unitary
elements }regarding the two actions addition and multiplication
respectively. In our case, the role of zero is the fixed point $O$. From the
addition algorithm (Algorithm 1 in \cite{2} and \cite{3}) we have:%

$$
\forall A,C\in \ell _{1},\left[ 
\begin{array}{l}
\mathbf{Step.1.}\exists B\notin OI \\ 
\mbox{}\\
\mathbf{Step.2.\ell }_{OI}^{B}\cap \ell _{OB}^{A}=D \\ 
\mbox{}\\
\mathbf{Step.3.\ell }_{CB}^{D}\cap OI=E%
\end{array}%
\right] \Longleftrightarrow A+C=E.
$$
From the construction of the $A+C$ point, we have that: \ 
$$
\ell ^{OB}\parallel \ell ^{AD};\ell ^{BC}\parallel \ell ^{D(A+C)},
$$

and 
$$
O,A,C,A+C\in \ell _{1};B,D\in \ell _{AC}^{B}.
$$

Hence,%
\begin{equation}
D(A+C)\parallel BC.
\end{equation}


We take now the dilatation $\delta $. Mark $\delta (O)=O",$ $\delta
(A)=A",\delta (C)=C",\delta (B)=B"$ we also have that $\delta (V)=V$
(see Fig.~\ref{fig:dilation}).

\begin{figure}[!ht]
\centering
\begin{pspicture}
(-3,-4)(9,6)
\newrgbcolor{wqwqwq}{0.3764705882352941 0.3764705882352941 0.3764705882352941}
\newrgbcolor{eqeqeq}{0.8784313725490196 0.8784313725490196 0.8784313725490196}
\newrgbcolor{ffxfqq}{1. 0.4980392156862745 0.}
\newrgbcolor{xdxdff}{0.49019607843137253 0.49019607843137253 1.}
\newrgbcolor{ffdxqq}{1. 0.8431372549019608 0.}
\newrgbcolor{zzttqq}{0.6 0.2 0.}
\newrgbcolor{dcrutc}{0.8627450980392157 0.0784313725490196 0.23529411764705882}
\psset{xunit=1.0cm,yunit=1.0cm,algebraic=true,dimen=middle,dotstyle=o,dotsize=5pt 0,linewidth=1.6pt,arrowsize=3pt 2,arrowinset=0.25}
\psline[linewidth=1.2pt,linecolor=wqwqwq](-1.,5.14)(7.8,5.14) 
\psline[linewidth=1.5pt](-2.,2.)(7.5,2.) 
\psline[linewidth=1.5pt](-1.,-1.)(6.,-1.) 
\psline[linewidth=1.5pt](1.38,-3.36)(-2.,2.)
\psline[linewidth=1.5pt](1.38,-3.36)(0.36,2.)
\psline[linewidth=1.5pt](1.38,-3.36)(3.2,2.)
\psline[linewidth=1.5pt,linecolor=wqwqwq](-1.58,0.6)(6.06,0.6) 
\psline[linewidth=2.pt,linestyle=dashed,dash=2pt 2pt,linecolor=blue](-0.10820895522388058,-1.)(1.36,0.6)
\psline[linewidth=2.pt,linestyle=dashed,dash=2pt 2pt,linecolor=blue](0.9317910447761196,-1.)(2.38,0.6)
\psline[linewidth=2.pt,linestyle=dashed,dash=4pt 4pt,linecolor=zzttqq](1.36,0.6)(2.18134328358209,-1.)
\psline[linewidth=2.pt,linestyle=dashed,dash=4pt 4pt,linecolor=zzttqq](2.38,0.6)(3.2013432835820894,-1.)
\psline[linewidth=2.pt](1.38,-3.36)(5.56,2.)
\psline[linewidth=2.pt,linestyle=dashed,dash=2pt 2pt,linecolor=blue](-2.,2.)(1.36,5.14)
\psline[linewidth=2.pt,linestyle=dashed,dash=2pt 2pt,linecolor=blue](0.36,2.)(3.5,5.14)
\psline[linewidth=2.pt,linestyle=dashed,dash=2pt 2pt,linecolor=dcrutc](3.2,2.)(1.36,5.14)
\psline[linewidth=2.pt,linestyle=dashed,dash=2pt 2pt,linecolor=dcrutc](5.56,2.)(3.5,5.14)
\psline[linewidth=2.pt,linestyle=dotted](1.38,-3.36)(1.36,5.14)
\psline[linewidth=2.pt,linestyle=dotted](1.38,-3.36)(3.5,5.14)
\rput[tl](1.54,-3.26){$V$}
\rput[tl](-0.52,-1.2){$O$}\rput[tl](0.46,-1.2){$A$}\rput[tl](2.22,-1.2){$C$}\rput[tl](3.32,-1.2){$A+C$}
\rput[tl](6.24,-0.9){${\ell}_1 $}
\rput[tl](-2.56,2.4){$O''$}\rput[tl](-0.3,2.41){$A''$}\rput[tl](3.36,2.41){$C''$}\rput[tl](5.68,2.41){$A''+C''$}
\rput[tl](7.8,2.31){${\ell}_2=\delta ({\ell}_1)$}
\rput[tl](5.68,1.85){$\delta (A+C)=A''+C''$}
\rput[tl](4.86,5.74){${\ell}_{A''C''}^{B''}={\ell}_{{\ell}_{1}}^{B''}=\delta ({\ell}_{{\ell}_1}^B)$}
\rput[tl](1.24,5.54){$B''$}\rput[tl](3.48,5.54){$D''$}
\rput[tl](1.05,0.95){$B$}\rput[tl](2.05,0.95){$D$}
\rput[tl](6.1,0.95){${\ell}_{AC}^B={\ell}_{{\ell}_1}^B$}
\begin{scriptsize}
\psdots[dotstyle=o, linewidth=1.2pt,linecolor = black, fillcolor = lightgray](1.38,-3.36)
\psdots[dotstyle=o, linewidth=1.2pt,linecolor = black, fillcolor = red](-0.10820895522388058,-1.)
\psdots[dotstyle=o, linewidth=1.2pt,linecolor = black, fillcolor = red](2.18134328358209,-1.)
\psdots[dotstyle=o, linewidth=1.2pt,linecolor = black, fillcolor = red](0.9317910447761196,-1.)
\psdots[dotstyle=o, linewidth=1.2pt,linecolor = black, fillcolor = red](3.2013432835820894,-1.)
\psdots[dotstyle=o, linewidth=1.2pt,linecolor = black, fillcolor = yellow](1.36,0.6)
\psdots[dotstyle=o, linewidth=1.2pt,linecolor = black, fillcolor = yellow](2.38,0.6)
\psdots[dotstyle=o, linewidth=1.2pt,linecolor = black, fillcolor = green](-1.9882089552238809,2.02)
\psdots[dotstyle=o, linewidth=1.2pt,linecolor = black, fillcolor = green](0.36,2.)
\psdots[dotstyle=o, linewidth=1.2pt,linecolor = black, fillcolor = green](3.2,2.)
\psdots[dotstyle=o, linewidth=1.2pt,linecolor = black, fillcolor = green](5.56,2.)
\psdots[dotstyle=o, linewidth=1.2pt,linecolor = black, fillcolor = blue](1.36,5.14)
\psdots[dotstyle=o, linewidth=1.2pt,linecolor = black, fillcolor = blue](3.5,5.14)
\end{scriptsize}
\end{pspicture}
\caption{The addition of two points in a line of Desargues affine plane under Dilation: $\delta (A + C) = \delta (A) + \delta (C).
$}
\label{fig:dilation}
\end{figure}

By dilation properties (see \cite{4}), we have that: \ 
$$
O"\in \ell ^{OV},B"\in \ell ^{VB},C"\in \ell ^{VC},A"\in \ell ^{VA},
$$

also from the definition of dilation (see \cite{4}), we have that:%
$$
OB\parallel \delta (O)\delta (B)=O"B";BC\parallel \delta (B)\delta (C)=B"C".
$$

Calculate now $\delta (A)+\delta (C)$ according to the addition algorithm
(see \cite{2} and \cite{3}). During addition of points $A$ and $C$ we chose
the point $B\notin \ell _{1},$ here we choose the point $\delta (B)=B"\notin
\ell _{2}$ (we do this for ease of proof, as in \cite{2}, we have shown that
the choice of auxiliary point $B$, or in our case $\delta (B)=B"$ is
arbitrary), by applying the addition algorithm we have:%
$$
\forall \delta (A),\delta (C)\in \ell _{2},\left[ 
\begin{array}{l}
\mathbf{Step.1.}\exists B"\notin \ell _{2} \\ 
\mbox{}\\
\mathbf{Step.2.\ell }_{\ell _{2}}^{B"}\cap \ell _{O"B"}^{A"}=D" \\
\mbox{}\\ 
\mathbf{Step.3.\ell }_{C"B"}^{D"}\cap \ell _{2}=E"%
\end{array}%
\right] \Longleftrightarrow A"+C"=E".
$$


\noindent From this we have:\\

\begin{eqnarray*}
\left. 
\begin{array}{@{\hspace{.5em}}c}
{A"}+C"\in \ell _{B"C"}^{D"}=\ell _{D(A+C)}^{D"} \\ 
{A"}+C"\in \ell _{2}%
\end{array}%
\right\}  &\Longrightarrow &A"+C"=\ell _{D(A+C)}^{D"}\cap \ell _{2} \\
\Longrightarrow \delta \left( A\right) +\delta \left( C\right)  &=&\ell_{D(A+C)}^{D"}\cap \ell _{2}.
\end{eqnarray*}


On the other hand, we have
\[
\delta :\ell _{1}\rightarrow \ell _{2}\ \mbox{and}
\]
\[
\left.
\begin{array}{r@{\hspace{.5em}}cc}
D&\mapsto D" = \delta \left( D\right)  \\
A+C & \mapsto \delta \left( A+C\right)
\end{array}
\right\}  \Longrightarrow D(A+C)\parallel D"\delta \left( A+C\right) = \delta\left( D\right) \delta \left( A+C\right).
\]

From this result, have that 

\[
\left.
\begin{array}{r@{\hspace{.5em}}cc}
\delta \left( A+C\right) &\in \ell _{D(A+C)}^{D"}  \\
\delta \left( D\right) ,\delta \left( A+C\right) & \in \ell _{2}
\end{array}
\right\}  \Rightarrow \delta \left( A+C\right) =\ell _{D(A+C)}^{D"}\cap \ell
_{2}=A"+C"=\delta \left( A\right) +\delta \left( C\right) .
\]


Hence, we achieve the required result, namely,%
$$
\delta \left( A+C\right) =\delta \left( A\right) +\delta \left( C\right) .
$$

This proof can also be made in the three-vertexes language (see \cite{5})

From the above constructions and results, we have a similarity between the
two three-vertexes 
$$ (O,C,B)\thicksim(O",C",B"),$$

we also have the similarity of 
$$
(A,A+C,D)\thicksim (A",A"+C",D").
$$

But from the similarity of the three-vertexes in the Desargues affine plane,
we also have: 
$$
(A,A+C,D)\thicksim (A",\delta \left( A+C\right) ,D")
$$

or in other words 
$$
(A,A+C,D)\thicksim (\delta \left( A\right) ,\delta \left( A+C\right) ,\delta
\left( D\right) ),
$$

from these we have 
$$
(A",A"+C",D")\thicksim (A",\delta \left( A+C\right) ,D").
$$

Since these two similar three-vertexes have the same two vertices then the
third vertice it will be the same, thus 
$$
A"+C"=\delta \left( A+C\right) \Longrightarrow \delta \left( A+C\right)
=\delta \left( A\right) +\delta \left( C\right)
$$

In parallelograms language, we would say from the above parallels that:
$$
\left( O,A,D,B\right) \thicksim \left( O",A",D",B"\right)
$$

and 
$$(C,A+C,D,B)\thicksim (\delta \left( C\right) ,\delta \left( A+C\right)
,\delta \left( D\right) ,\delta \left( B\right) )=(C",\delta \left(
A+C\right) ,D",B")$$

But on the other hand, from the above parallelisms and from the addition of
points $A"$ and $C"$ we would have the similarity of the parallelograms: 

$$ (C,A+C,D,B)\thicksim (C",A"+C",D",B"),$$

since the similarity of the $n$-vertexes in the Desargues affine plane is the
relation of equivalence (see \cite{5}) (such would be and for
parallelograms), we have: 
$$ (C",\delta \left( A+C\right) ,D",B")\thicksim (C",A"+C",D",B"). $$

or, equally

$$(\delta \left( C\right) ,\delta \left( A+C\right) ,\delta \left( D\right)
,\delta \left( B\right) )\thicksim (\delta \left( C\right) ,\delta \left(
A\right) +\delta \left( C\right) ,\delta \left( D\right) ,\delta \left(
B\right) ).$$

Since these two parallel parallelograms have three equal vertices they would
be equal, then the fourth vertices would coincide, thus%

$$ (C",\delta \left( A+C\right) ,D",B")=(C",A"+C",D",B")\Longrightarrow\\
 \delta\left( A+C\right) =\delta \left( A\right) +\delta \left( C\right) . $$

$\mathbf{Case.2.}$ We see now that stands for multiplication properties: 
$$ \delta \left( A\ast C\right) =\delta \left( A\right) \ast \delta \left(
C\right)$$

During the multiplication of points $A$ and $C,$ we choose a point $I$ (necessarily different from the point $O$ that we have assigned in the role of the element 'zero'. We do this freely, as each line in affine plan having at least 2-points) in
the role of "\textbf{one}" (see Fig.~\ref{fig:affineMap}), the dilatation $\delta $\ \ gives us
the point \textbf{I} in point $\delta (I)=I",$ then from point $A$ we
construct the line \ $\ell _{BC}^{E}$ and thus determine multiplication as
well:%
\begin{equation}
A\ast C=\ell _{BC}^{E}\cap \ell _{1}.
\end{equation}

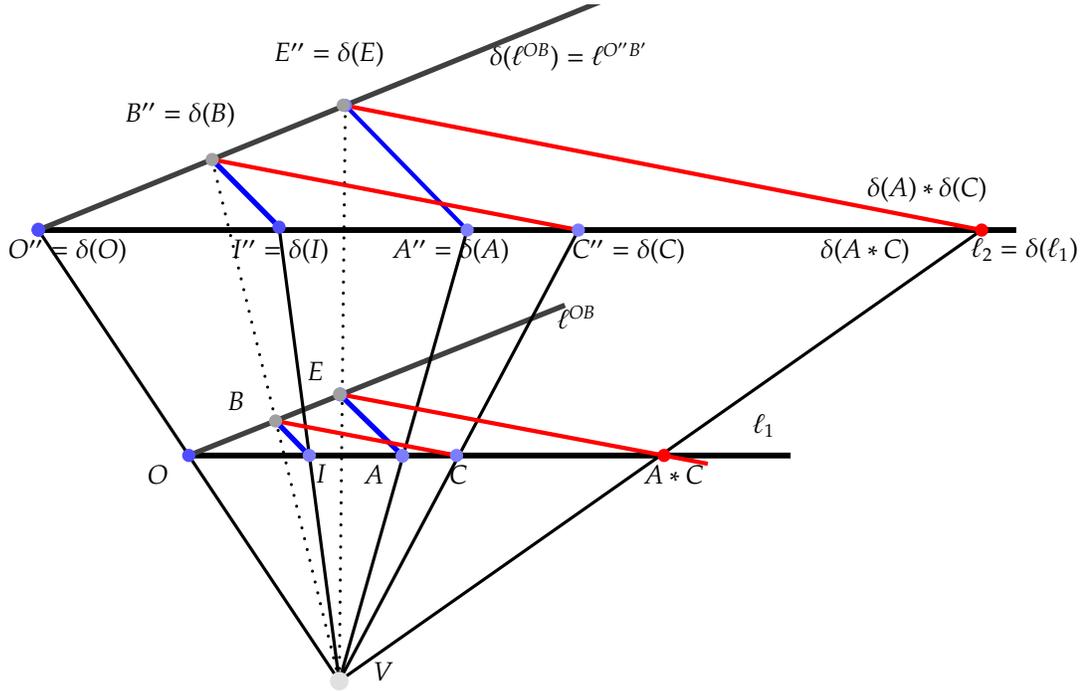
\begin{figure}[!ht]
\centering%
\newrgbcolor{ududff}{0.30196078431372547 0.30196078431372547 1.}
\newrgbcolor{uququq}{0.25098039215686274 0.25098039215686274 0.25098039215686274}
\newrgbcolor{eqeqeq}{0.8784313725490196 0.8784313725490196 0.8784313725490196}
\newrgbcolor{xdxdff}{0.49019607843137253 0.49019607843137253 1.}
\newrgbcolor{aqaqaq}{0.6274509803921569 0.6274509803921569 0.6274509803921569}
\psset{xunit=1.0cm,yunit=1.0cm,algebraic=true,dimen=middle,dotstyle=o,dotsize=5pt 0,linewidth=1.6pt,arrowsize=3pt 2,arrowinset=0.25}
\begin{pspicture*}(-2.4,-6.5)(13.,3.)
\psline[linewidth=2.2pt](0.,-3.)(8.,-3.)
\psline[linewidth=2.pt,linecolor=uququq](0.,-3.)(5.,-1.)
\psline[linewidth=1.2pt](2.,-6.)(-2.,0.)
\psline[linewidth=2.2pt](-2.,0.)(11.,0.)
\psline[linewidth=2.pt,linecolor=uququq](-2.,0.)(6.14,3.31)
\psline[linewidth=2.pt,linecolor=blue](1.6,-3.)(1.1517241379310348,-2.539310344827586)
\psline[linewidth=2.pt,linecolor=blue](2.84,-3.)(2.013793103448276,-2.1944827586206896)
\psline[linewidth=1.2pt,linestyle=dotted](2.,-6.)(0.3089180309185826,0.9388843590098904)
\psline[linewidth=1.2pt](2.,-6.)(1.2,0.04)
\psline[linewidth=1.2pt](2.,-6.)(3.7,0.)
\psline[linewidth=1.2pt](2.,-6.)(5.18,0.)
\psline[linewidth=2.pt,linecolor=blue](1.2,0.04)(0.3089180309185826,0.9388843590098904)
\psline[linewidth=1.2pt,linestyle=dotted](2.,-6.)(2.080610419901652,1.659314556495635)
\psline[linewidth=1.6pt,linecolor=blue](2.080610419901652,1.659314556495635)(3.7,0.)
\psline[linewidth=1.6pt,linecolor=red](1.1517241379310348,-2.539310344827586)(3.56,-3.)
\psline[linewidth=1.6pt,linecolor=red](0.3089180309185826,0.9388843590098904)(5.18,0.)
\psline[linewidth=1.6pt,linecolor=red](2.0019392947289205,-2.1839534317502793)(6.9,-3.11)
\psline[linewidth=1.2pt](2.,-6.)(10.54,0.)
\psline[linewidth=1.6pt,linecolor=red](2.054473610936636,1.6611434462162489)(10.54,0.)
\rput[tl](2.45,-5.75){$V$}
\rput[tl](-0.55,-3.12){$O$}
\rput[tl](-2.4,-0.12){$O''=\delta(O)$}
\rput[tl](1.7,-3.12){$I$}
\rput[tl](2.34,-3.12){$A$}
\rput[tl](3.47,-3.12){$C$}
\rput[tl](0.58,-0.12){$I''=\delta(I)$}
\rput[tl](2.72,-0.12){$A''=\delta(A)$}
\rput[tl](5.1,-0.12){$C''=\delta(C)$}
\rput[tl](0.52,-2.15){$B$}
\rput[tl](-0.84,1.7){$B''=\delta(B)$}
\rput[tl](1.58,-1.75){$E$}
\rput[tl](1.14,2.5){$E''=\delta(E)$}
\rput[tl](7.5,-2.45){$\ell_{1}$}
\rput[tl](10.4,-0.1){$\ell_{2}=\delta(\ell_{1})$}
\rput[tl](4.9,-1.){$\ell^{OB}$}
\rput[tl](4.,2.5){$\delta(\ell^{OB})=\ell^{O''B'}$}
\rput[tl](6.06,-3.12){$A*C$}
\rput[tl](8.4,-0.12){$\delta(A*C)$}
\rput[tl](9.02,0.7){$\delta(A)*\delta(C)$}
\begin{scriptsize}
\psdots[dotstyle=*,linecolor=ududff](0.,-3.)
\psdots[dotsize=7pt 0,dotstyle=*,linecolor=eqeqeq](2.,-6.)
\psdots[dotstyle=*,linecolor=ududff](-2.,0.)
\psdots[dotstyle=*,linecolor=ududff](-2.,0.01)
\psdots[dotstyle=*,linecolor=ududff](-2.,0.)
\psdots[dotstyle=*,linecolor=xdxdff](1.6,-3.)
\psdots[dotstyle=*,linecolor=xdxdff](2.84,-3.)
\psdots[dotstyle=*,linecolor=xdxdff](3.56,-3.)
\psdots[dotstyle=*,linecolor=aqaqaq](1.1517241379310348,-2.539310344827586)
\psdots[dotstyle=*,linecolor=xdxdff](2.013793103448276,-2.1944827586206896)
\psdots[dotstyle=*,linecolor=aqaqaq](0.3089180309185826,0.9388843590098904)
\psdots[dotstyle=*,linecolor=ududff](1.2,0.04)
\psdots[dotstyle=*,linecolor=xdxdff](3.7,0.)
\psdots[dotstyle=*,linecolor=xdxdff](5.18,0.)
\psdots[dotstyle=*,linecolor=xdxdff](2.84,-3.01)
\psdots[dotstyle=*,linecolor=xdxdff](2.080610419901652,1.659314556495635)
\psdots[dotstyle=*,linecolor=xdxdff](2.014473610936636,-2.1788565537837514)
\psdots[dotstyle=*,linecolor=aqaqaq](2.054473610936636,1.6611434462162489)
\psdots[dotstyle=*,linecolor=aqaqaq](2.0019392947289205,-2.1839534317502793)
\psdots[dotstyle=*,linecolor=red](6.318186194892819,-3.)
\psdots[dotstyle=*,linecolor=red](10.54,0.)
\end{scriptsize}
\end{pspicture*}
\caption{The multiplication of two points in a line of the Desargues affine plane under Dilatation: $ \delta (A * C) =\delta (A) * \delta (C) $}
\label{Figure.2}
\end{figure}

By dilation properties have 
$$
O"\in \ell ^{VO},I"\in \ell ^{VI},C"\in \ell ^{VC},A"\in \ell ^{VA},B"\in
\ell ^{VB},
$$

also for these points we have:

$$
O",A",C"\in \ell _{2}=\delta \left( \ell _{1}\right) \text{ and }B"\in
\delta \left( OB\right) .
$$

Also by definition and dilation properties (see~\cite{3}), we have:  
$$
OB\parallel \delta \left( O\right) \delta \left( B\right) =O"B",BC\parallel
\delta \left( B\right) \delta \left( C\right) =B"C",IB\parallel \delta
\left( I\right) \delta \left( B\right) =I"B"
$$

Calculate now $A"\ast C"$according to multiplication Algorithm (see \cite{2},%
\cite{3}). During the multiplication of points $A$ and $C$ we chose, as
auxiliary points, the point $B\notin \ell _{1},$ here we choose the point $%
B"=\delta \left( B\right) \notin \ell _{2}$ (we do this for ease of proof,
as in \cite{2}, we have shown that the choice of auxiliary point $B$, or in
our case $\delta (B)$ is arbitrary), by applying the multiplication
algorithm we have:
$$
\forall A"=\delta \left( A\right) ,C"=\delta \left( C\right) \in \ell
_{2},\left\{ 
\begin{array}{l}
\mathbf{Step.}1.\exists B"\notin \ell _{2} \\ 
\mbox{} \\ 
\mathbf{Step.}2.\ell _{I"B"}^{A"}\cap O"B"=E" \\ 
\mbox{} \\ 
\mathbf{Step.}3.\ell _{B"C"}^{E"}\cap \ell _{2}=F"%
\end{array}%
\right\} \Longleftrightarrow A"\ast C"=F".
$$

Now multiply by Algorithm 2 of the multiplication definition (see \cite{3})
to obtain
\begin{equation*}
A"=\delta \left( A\right) ,C"=\delta \left( C\right) \in \ell _{2}.
\end{equation*}

From point $A"$ construct the line $\ell _{I"B"}^{A"}$ but since $%
IB\parallel I"B"$, we have that $\ell _{I"B"}^{A"}=\ell _{IB}^{A"}$ and 
$\ell _{I"B"}^{A"}\cap \ell ^{O"B"}=E"$, we have
\begin{equation*}
A"E"\parallel I"B"\parallel IB\parallel AE\Longrightarrow A"E"\parallel AE.
\end{equation*}

Now from point $E"$ construct the line $\ell _{B"C"}^{E"},$ by the
definition of multiplication, we have that:%
\begin{equation*}
E"(A"\ast C")\parallel B"C",
\end{equation*}

but%
\begin{equation*}
\left[ 
\begin{array}{c}
B"C"\parallel BC \\ 
BC\parallel E(A\ast C)%
\end{array}%
\right] \Longrightarrow E"(A"\ast C")\parallel E(A\ast C).
\end{equation*}

See now $\delta \left( E\right) ,$ by dilatation properties have:%
\begin{equation*}
\delta \left( \ell _{1}\right) =\ell _{2},\delta \left( OB\right) =O"B",
\end{equation*}

we consider two three-vertexes $(O,A,E)$ and $(O",A",E"),$ have a similarity
of these two three-vertexes, because:

\begin{equation*}
\begin{array}{cc}
\left. 
\begin{array}{c}
AE\parallel A"E", \\ 
OA\parallel O"A"(\ell _{1}\parallel \ell _{2}), \\ 
OE\parallel O"E" \\ 
(E"\in O"B",E\in OB,OB\parallel O"B")%
\end{array}%
\right\} & \Longrightarrow (O,A,E)\sim (O",A",E")%
\end{array}%
\end{equation*}

Mark with $E^{\prime \prime \prime }=\delta \left( E\right) ,$ by dilatation
properties and from the above parallels, and we have that,%
\begin{equation*}
AE\parallel A"E^{\prime \prime \prime }=A"\delta \left( E\right) \Rightarrow
\delta \left( E\right) \in \ell _{AE}^{A"}=\ell _{A"E"}^{A"}
\Rightarrow \delta \left( E\right) \in \ell _{A"E"}^{A"}
\end{equation*}

but 
\begin{equation*}
\delta \left( E\right) \in O"B"\Rightarrow \delta \left( E\right) =\ell
_{A"E"}^{A"}\cap O"B"=E"\Rightarrow E"=E^{\prime \prime \prime }
\end{equation*}

Or with three-vertexes similarities 
\begin{equation*}
OA\parallel O"A",OE\parallel O"E^{\prime \prime \prime }\Longrightarrow
(O,A,E)\sim (O",A",E^{\prime \prime \prime }),
\end{equation*}

since similarity, in the Desargues affine plane, is a\ equivalence relation,
have that:%
\begin{equation*}
(O",A",E")\sim (O",A",E^{\prime \prime \prime })\Longrightarrow E"=E^{\prime
\prime \prime }\Longrightarrow E"=\delta \left( E\right) .
\end{equation*}

so, by 
\begin{equation*}
E"(A"\ast C")\parallel E(A\ast C)\Rightarrow E(A\ast C)\parallel \delta
\left( E\right) \left[ \delta \left( A\right) \ast \delta \left( C\right) %
\right]
\end{equation*}%
\begin{equation*}
\Rightarrow \delta \left( A\right) \ast \delta \left( C\right) =\ell
_{\delta \left( B\right) \delta \left( C\right) }^{\delta \left( E\right)
}\cap \ell _{2}=\ell _{E\left( A\ast C\right) }^{\delta \left( E\right)
}\cap \ell _{2}.
\end{equation*}

See now, two three-vertexes $(A,E,A\ast C)$ and $(A",E",\delta \left( A\ast
C\right) ),$ from above we have similarity 
\begin{equation*}
(A,E,A\ast C)\sim (A",E",A"\ast C").
\end{equation*}

From dilation properties, have that:%
\begin{equation*}
A"\in \ell ^{VA},E"\in \ell ^{VE},\delta \left( A\ast C\right) \in \ell
^{V\left( A\ast C\right) }
\end{equation*}

and 
\begin{equation*}
AE\parallel A"E",A\left( A\ast C\right) \parallel A"\delta \left( A\ast
C\right) \Longrightarrow \delta \left( A\ast C\right) \in \ell _{2},
\end{equation*}%
\begin{equation*}
E\left( A\ast C\right) \parallel E"\delta \left( A\ast C\right)
\Longrightarrow \delta \left( A\ast C\right) \in \ell _{E\left( A\ast
C\right) }^{E"}
\end{equation*}

Thus, 
\begin{equation*}
\delta \left( A\ast C\right) =\ell _{E\left( A\ast C\right) }^{E"}\cap \ell
_{2}=\delta \left( A\right) \ast \delta \left( C\right) ,
\end{equation*}

so we proved that:%
\begin{equation*}
\delta \left( A\ast C\right) =\delta \left( A\right) \ast \delta \left(
C\right) .
\end{equation*}
\end{proof}

\begin{Lemma}\label{lemma:two}
Each translation (dilatation which is different from $id_{\mathcal{P}}$  and has no fixed point) in a finite Desargues affine plane $\mathcal{A}_{\mathcal{D}}=(%
\mathcal{P}, \mathcal{L},\mathcal{I})$ is an isomorphism's between
skew-fields constructed over isomorphic lines $\ell _{1},\ell _{2}\in 
\mathcal{L},$\ of that plane.
\end{Lemma}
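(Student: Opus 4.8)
\emph{Proof proposal.} The plan is to run exactly the diagram chase used in the proof of Lemma~\ref{lemma:one}, with the single modification that the pencil of trace lines concurrent at the center $V$ is replaced by a pencil of \emph{parallel} trace lines, since a translation has no center. First I would fix the data. Let $\delta$ be a translation of $\mathcal{A}_{\mathcal{D}}$, that is, a dilatation with $\delta\neq id_{\mathcal{P}}$ and without fixed points, and let $\ell_{1}\in\mathcal{L}$. Being a dilatation, $\delta$ maps $\ell_{1}$ to a line parallel to it; the degenerate case $\delta(\ell_{1})=\ell_{1}$ (translation along the direction of $\ell_{1}$) is the ``line onto itself'' situation already treated in~\cite{1}, so I assume $\ell_{2}:=\delta(\ell_{1})\parallel\ell_{1}$ with $\ell_{2}\neq\ell_{1}$. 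I construct $\mathbf{K}_{1}=(\ell_{1},+,\ast)$ over $\ell_{1}$ by choosing a zero $O$ and a one $I$ as in~\cite{2,3}, and I construct $\mathbf{K}_{2}=(\ell_{2},+,\ast)$ over $\ell_{2}$ by declaring its zero to be $\delta(O)=:O''$ and its one to be $\delta(I)=:I''$. This choice is legitimate, because the skew-field construction over a line needs only a pair of distinct points on that line, and injectivity of $\delta$ turns $O\neq I$ into $O''\neq I''$. By Proposition~\ref{prop:bijectionOfLines}, $\delta:\ell_{1}\to\ell_{2}$ is a bijection, so only the homomorphism identities $\delta(A+C)=\delta(A)+\delta(C)$ and $\delta(A\ast C)=\delta(A)\ast\delta(C)$ remain.

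Next I would establish those identities. The structural inputs are: (i) a translation, being a dilatation, carries each line onto a parallel line and therefore preserves incidence and parallelism of lines; and (ii) for every point $P$ the connecting line $P\,\delta(P)$ lies in one fixed direction (the translation direction), so all connecting lines $P\,\delta(P)$, $Q\,\delta(Q)$, \ldots\ are mutually parallel --- this is the precise analogue of ``all trace lines pass through $V$'' used in Lemma~\ref{lemma:one}. For addition, running Algorithm~1 of~\cite{2,3} on $\ell_{1}$ with an auxiliary point $B\notin\ell_{1}$ yields $D\in\ell^{B}_{\ell_{1}}$ and $A+C=E$ with $\ell^{OB}\parallel\ell^{AD}$ and $\ell^{BC}\parallel\ell^{D(A+C)}$. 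Applying $\delta$ and using (i), the images $O'',A'',C'',B''=\delta(B)\notin\ell_{2},D''=\delta(D)$ satisfy precisely the relations that Algorithm~1, executed on $\ell_{2}$ with auxiliary point $B''$, requires in order to return $\delta(E)$; since~\cite{2} shows the output is independent of the auxiliary point, this output equals $\delta(A)+\delta(C)$, so $\delta(A+C)=\delta(A)+\delta(C)$. The three-vertex and parallelogram reformulations of~\cite{5} give the same conclusion: from the similarities $(O,C,B)\sim(O'',C'',B'')$, $(A,A+C,D)\sim(A'',A''+C'',D'')$ and $(A,A+C,D)\sim(A'',\delta(A+C),D'')$, two similar three-vertexes with two common vertices force the third to coincide, and analogously for parallelograms, using that similarity of $n$-vertexes in a Desargues affine plane is an equivalence relation. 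Multiplication is handled identically with Algorithm~2 of~\cite{3}; the one extra ingredient is that $\delta(I)=I''$ is, by our construction, the one of $\mathbf{K}_{2}$, so the ``unit line'' $IB$ is carried to $I''B''$ in the correct position and the relations $A''E''\parallel AE$ and $E''(A''\ast C'')\parallel E(A\ast C)$ propagate exactly as in Lemma~\ref{lemma:one}.

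Finally, a bijective homomorphism of skew-fields is an isomorphism, so $\delta:\mathbf{K}_{1}\to\mathbf{K}_{2}$ is a skew-field isomorphism and $\ell_{1},\ell_{2}$ are isomorphic lines, which is the assertion. The step I expect to be the main obstacle is the one place where Lemma~\ref{lemma:one} genuinely used the fixed point $V$: every occurrence of a statement of the form ``$\delta(P)\in\ell^{VP}$'' must be replaced by ``$P\,\delta(P)$ is parallel to the translation direction'', and one must verify that the similar-triangle and similar-parallelogram arguments of~\cite{5} --- stated there for configurations arising from general dilatations --- remain valid under this substitution; in fact they do, precisely because those arguments only ever use parallelism and incidence, not the existence of a center. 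An alternative and more economical route, valid at least when the underlying skew-field has characteristic $\neq 2$, is to write $\delta=\sigma_{2}\circ\sigma_{1}$ as a composition of two point reflections (half-turns, i.e.\ dilatations of ratio $-1$ with a fixed point), apply Lemma~\ref{lemma:one} to each $\sigma_{i}$ with the zero and one of the intermediate line chosen to be the $\sigma_{1}$-images of $O$ and $I$, and conclude that $\delta$ is a composition of skew-field isomorphisms, hence an isomorphism.
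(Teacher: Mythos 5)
Your proposal is correct and follows essentially the same route as the paper's proof: you run the addition and multiplication algorithms on $\ell_1$ and on $\ell_2=\varphi(\ell_1)$ with the transported auxiliary point, and you replace the concurrence of the traces at the center $V$ (used in Lemma~\ref{lemma:one}) by the parallelism of all traces $P\,\varphi(P)$, which is exactly how the paper argues --- via parallelograms such as $(B,D,\varphi(D),B')$ and the Desargues axiom --- that $D'=\varphi(D)$ and hence $\varphi(A+C)=\varphi(A)+\varphi(C)$ and $\varphi(A\ast C)=\varphi(A)\ast\varphi(C)$, with the zero and one of $\mathbf{K}_2$ taken (as the paper does implicitly) to be the images of those of $\mathbf{K}_1$. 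Your closing alternative via a decomposition into two half-turns is a genuinely different shortcut not used in the paper, and you correctly flag its limitation in characteristic $2$.
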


\begin{proof}
Consider a translation $\varphi $ $\ $%
(different from $id_{\mathcal{P}}$ and there is no fixed points ) in the
Desargues affine plane.
The case when this translation leads us
to a line in itself (the translation has a direction according to line $%
\ell $ ), we have dealt with \ \cite{1}.

Consider a translation $\varphi :\ell _{1}\longrightarrow \ell _{2}$ in the
Desargues affine plane. From properties of translations, know that \ $\varphi
\left( \ell _{1}\right) =\ell _{2}\parallel \ell _{1}.$ This translation has
a different direction from that of the lines $\ell _{1},\ell _{2}$ which
belong to the same equivalence class, according to the parallelism.

We mark \ $\mathbf{K}_{1}=$\ $\left( \ell _{1},+,\ast \right) $\ the
skew-field constructed over the line $\ell _{1}$ and $\mathbf{K}_{2}=$\ $%
\left( \ell _{2},+,\ast \right) $\ the skew-field constructed over the line $%
\ell _{2}$ same way as in Lemma~\ref{lemma:one}. 

From Prop.~\ref{prop:bijectionOfLines}, 
we have that \ 
$\varphi :\ell _{1}\longrightarrow \ell _{2}$ is a bijection. \ Now we see
this translations $\varphi $ as a function \ $\varphi :\mathbf{K}_{1}=\
\left( \ell _{1},+,\ast \right) \longrightarrow \mathbf{K}_{2}=$\ $\left(
\ell _{2},+,\ast \right) $ namely, as a function \ $\varphi :\mathbf{K}%
_{1}\longrightarrow \mathbf{K}_{2}.$  Next, we show that $\varphi $ \ is an
isomorphism of these two skew-fields. \  The translation $\varphi $ is a
bijection of lines $\ell _{1}$ and $\ell _{2}.$    Let us prove that $\varphi $%
, is a homomorphism between the skew-fields \ $\mathbf{K}_{1}\ $and $\mathbf{%
K}_{2},$ so we have to show that

\begin{equation*}
\forall A,C\in \ell _{1},%
\begin{array}{l}
\mathbf{1.}\varphi \left( A+C\right) =\varphi \left( A\right) +\varphi
\left( C\right) \\ 
\mbox{} \\ 
\mathbf{2}.\varphi \left( A\ast C\right) =\delta \left( A\right) \ast
\varphi \left( C\right)%
\end{array}%
\end{equation*}

{\bf Case 1}  For points $A,C\in \ell _{1},$ we have $\varphi \left(
A\right) ,\varphi \left( C\right) \in \ell _{2}.$  We show first that: 
\begin{equation*}
\varphi \left( A+C\right) =\varphi \left( A\right) +\varphi \left( C\right) .
\end{equation*}

During the construction of the skew-field over a line on the Desargues affine
plane, we choose two points (each affine plan has at least two points) which
we write with $\mathbf{O}$ and $\mathbf{I}$ and call them \textbf{zero} and 
\textbf{one} respectively, these points play the role of \textbf{unitary
elements }regarding the two actions addition and multiplication
respectively. In our case, the role of zero is the fixed point $O$. From the
addition algorithm (Algorithm 1 in \cite{2} and \cite{3}) we have:%
\begin{equation*}
\forall A,C\in \ell _{1},\left[ 
\begin{array}{l}
\mathbf{Step.1.}\exists B\notin OI \\ 
\mbox{} \\
\mathbf{Step.2.\ell }_{OI}^{B}\cap \ell _{OB}^{A}=D \\ 
\mbox{} \\
\mathbf{Step.3.\ell }_{CB}^{D}\cap OI=E%
\end{array}%
\right] \Longleftrightarrow A+C=E
\end{equation*}

From the construction of the $A+C$ point, we have that: \ 
\begin{equation*}
\ell ^{OB}\parallel \ell ^{AD};\ell ^{BC}\parallel \ell ^{D(A+C)};
\end{equation*}

and 
\begin{equation*}
O,A,C,A+C\in \ell _{1};B,D\in \ell _{AC}^{B}=\ell _{\ell _{1}}^{B},
\end{equation*}

Hence,
\[
D(A+C)\parallel BC.
\]

We take now the translation $\varphi $.   
\begin{equation*}
\varphi (O)=O^{\prime },\varphi (A)=A^{\prime },\varphi (C)=C^{\prime
},\varphi (B)=B^{\prime }.
\end{equation*}

and we have that
\begin{equation*}
A^{\prime }+C^{\prime }=\varphi (A)+\varphi (C)
\end{equation*}

\begin{figure}[!ht]
\centering%
\newrgbcolor{xdxdff}{0.49019607843137253 0.49019607843137253 1.}
\newrgbcolor{aqaqaq}{0.6274509803921569 0.6274509803921569 0.6274509803921569}
\psset{xunit=1.0cm,yunit=1.0cm,algebraic=true,dimen=middle,dotstyle=o,dotsize=5pt 0,linewidth=1.6pt,arrowsize=3pt 2,arrowinset=0.25}
\begin{pspicture*}(0.9,0.3)(11.6,6.8)
\psline[linewidth=2.pt](1.,1.)(10.,1.)
\psline[linewidth=2.pt,linecolor=gray](1.64,2.98)(9.3,3.)
\psline[linewidth=2.pt,linecolor=blue](1.46,1.)(2.840096257413593,2.9831334105937692)
\psline[linewidth=2.pt,linecolor=red](2.840096257413593,2.9831334105937692)(5.,1.)
\psline[linewidth=2.pt,linecolor=blue](3.73,1.02)(5.110096257413593,3.0031334105937693)
\psline[linewidth=2.pt,linecolor=red](5.110096257413593,3.0031334105937693)(7.27,1.02)
\psline[linewidth=2.pt](1.75,3.98)(10.75,3.98)
\psline[linewidth=2.pt,linecolor=gray](2.39,5.96)(10.05,5.98)
\psline[linewidth=2.pt,linecolor=blue](2.21,3.98)(3.590096257413597,5.963133410593769)
\psline[linewidth=2.pt,linecolor=red](3.590096257413597,5.963133410593769)(5.75,3.98)
\psline[linewidth=2.pt,linecolor=blue](4.5,4.)(5.860096257413592,5.983133410593769)
\psline[linewidth=2.pt,linecolor=red](5.860096257413592,5.983133410593769)(8.02,4.)
\psline[linewidth=1.2pt]{->}(1.46,1.)(2.21,3.98)
\psline[linewidth=1.2pt]{->}(3.73,1.02)(4.5,4.)
\psline[linewidth=1.2pt]{->}(5.,1.)(5.75,3.98)
\psline[linewidth=1.2pt]{->}(7.27,1.02)(8.02,4.)
\psline[linewidth=1.2pt,linestyle=dotted]{->}(2.840096257413593,2.9831334105937692)(3.590096257413597,5.963133410593769)
\psline[linewidth=1.2pt,linestyle=dotted]{->}(5.110096257413593,3.0031334105937693)(5.860096257413592,5.983133410593769)
\rput[tl](1.2,0.85){$O$}
\rput[tl](3.44,0.85){$A$}
\rput[tl](4.78,0.85){$C$}
\rput[tl](6.6,0.85){$A+C$}
\rput[tl](2.32,3.4){$B$}
\rput[tl](4.58,3.4){$D$}
\rput[tl](3.46,6.4){$B'$}
\rput[tl](5.8,6.4){$D'$}
\rput[tl](1.52,3.85){$O'$}
\rput[tl](3.8,3.85){$A'$}
\rput[tl](5.92,3.85){$C'$}
\rput[tl](7.68,4.5){$A'+C'=\varphi(A)+\varphi(C)$}
\rput[tl](8.06,3.85){$\varphi(A+C)$}
\rput[tl](8.34,1.52){$\ell_{1}$}
\rput[tl](8.38,2.85){$\ell^{B}_{AC}$}
\rput[tl](7.06,5.83){$\ell^{B'}_{A'C'}=\ell^{B'}_{AC}$}
\rput[tl](9.78,3.85){$\ell_{2}=\varphi(\ell_{1})$}
\rput[tl](1.36,2.7){$\varphi$}
\rput[tl](7.3,2.7){$\varphi$}
\rput[tl](5.64,4.8){$\varphi$}
\rput[tl](3.32,4.8){$\varphi$}
\rput[tl](5.34,2.){$\varphi$}
\rput[tl](3.76,2.7){$\varphi$}
\begin{scriptsize}
\psdots[dotsize=7pt 0,dotstyle=*,linecolor=xdxdff](1.46,1.)
\psdots[dotstyle=*,linecolor=xdxdff](3.74,1.)
\psdots[dotsize=7pt 0,dotstyle=*,linecolor=xdxdff](5.,1.)
\psdots[dotsize=6pt 0,dotstyle=*,linecolor=aqaqaq](2.840096257413593,2.9831334105937692)
\psdots[dotsize=7pt 0,dotstyle=*,linecolor=xdxdff](3.73,1.02)
\psdots[dotstyle=*,linecolor=xdxdff](5.110096257413593,3.0031334105937693)
\psdots[dotsize=7pt 0,dotstyle=*,linecolor=xdxdff](7.27,1.02)
\psdots[dotsize=6pt 0,dotstyle=*,linecolor=aqaqaq](5.110096257413593,3.0031334105937693)
\psdots[dotsize=7pt 0,dotstyle=*,linecolor=xdxdff](2.21,3.98)
\psdots[dotstyle=*,linecolor=xdxdff](4.49,3.98)
\psdots[dotsize=7pt 0,dotstyle=*,linecolor=xdxdff](5.75,3.98)
\psdots[dotsize=6pt 0,dotstyle=*,linecolor=aqaqaq](3.590096257413597,5.963133410593769)
\psdots[dotsize=7pt 0,dotstyle=*,linecolor=xdxdff](4.5,4.)
\psdots[dotstyle=*,linecolor=xdxdff](5.860096257413592,5.983133410593769)
\psdots[dotsize=7pt 0,dotstyle=*,linecolor=xdxdff](8.02,4.)
\psdots[dotsize=6pt 0,dotstyle=*,linecolor=aqaqaq](5.860096257413592,5.983133410593769)
\end{scriptsize}
\end{pspicture*}
\caption{The addition of two points in a line of Desargues affine plane under Translation: $ \varphi (A + C) = \varphi (A) + \varphi (C)$}
\label{Fig3}
\end{figure}
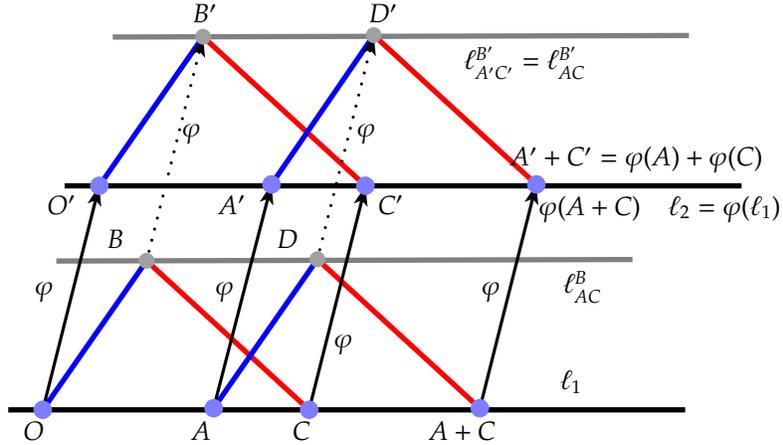

From the additions of points $A$ and $C$ in line $\ell _{1}$, we have that,
the ordered quadruplet \ $(O,A,D,B)$ and $(C,A+C,D,B)$ are parallelograms (see Fig.~\ref{Fig3}).

From the translation properties described at \cite{4}, we have that%
\begin{equation*}
OB\parallel \varphi \left( O\right) \varphi \left( B\right) =O^{\prime
}B^{\prime },BC\parallel \varphi \left( B\right) \varphi \left( C\right)
=B^{\prime }C^{\prime },
\end{equation*}

\begin{equation*}
\varphi \left( O\right) ,\varphi \left( A\right) ,\varphi \left( C\right)
,\varphi (A+C)\in \ell _{2}.
\end{equation*}

For more 
\begin{equation*}
\varphi \left( D\right) \in \ell _{\ell _{2}}^{B^{\prime }}=\ell _{\ell
_{1}}^{B^{\prime }},
\end{equation*}

and have that the following ordered quadruplet are parallelograms:%
\begin{equation*}
(O,B,B^{\prime },O^{\prime });(O,A,A^{\prime },O^{\prime });(B,C,C^{\prime
},B^{\prime });(A,C,C^{\prime },A^{\prime });
\end{equation*}%
\begin{equation*}
(A,D,\varphi \left( D\right) ,A^{\prime });(A,A+C,\varphi (A+C),\varphi
\left( D\right) );(D,A+C,\varphi (A+C),\varphi \left( D\right) ).
\end{equation*}

Now calculate the addition of points $A^{\prime }=\varphi \left( A\right) $
and $C^{\prime }=\varphi \left( C\right) $. From the addition algorithm
(Algorithm 1 in \cite{2} and \cite{3}), we have:%
\begin{equation*}
\forall A^{\prime },C^{\prime }\in \ell _{2},\left[ 
\begin{array}{l}
\mathbf{Step.1.}\exists B^{\prime }=\varphi \left( B\right) \notin \ell _{2}
\\
\mbox{} \\ 
\mathbf{Step.2.\ell }_{\ell _{2}}^{B^{\prime }}\cap \ell _{O^{\prime
}B^{\prime }}^{A^{\prime }}=D^{\prime } \\ 
\mbox{} \\
\mathbf{Step.3.\ell }_{B^{\prime }C^{\prime }}^{D^{\prime }}\cap \ell _{2}=E%
\end{array}%
\right] \Longleftrightarrow A^{\prime }+C^{\prime }=E
\end{equation*}

From the construction of the $A^{\prime }+C^{\prime }$ point, we have that:
\ 
\begin{equation*}
\ell ^{O^{\prime }B^{\prime }}\parallel \ell ^{A^{\prime }D^{\prime }};\ell
^{B^{\prime }C^{\prime }}\parallel \ell ^{D^{\prime }(A^{\prime }+C^{\prime
})};
\end{equation*}

and

\begin{eqnarray*}
D^{\prime } &\in &\mathbf{\ell }_{\ell _{2}}^{B^{\prime }}=\mathbf{\ell }%
_{\ell _{1}}^{B^{\prime }}\text{ \ and } \\
D^{\prime } &\in &\ell _{O^{\prime }B^{\prime }}^{A^{\prime }}=\ell
_{OB}^{A^{\prime }}=\ell _{AD}^{A^{\prime }}=\ell _{AD}^{\varphi \left(
A\right) } \\
\Rightarrow D^{\prime } &=&\varphi \left( D\right) ,
\end{eqnarray*}

and also 
\begin{equation*}
O^{\prime },A^{\prime },C^{\prime },A^{\prime }+C^{\prime }\in \ell
_{2};B^{\prime },D^{\prime }\in \ell _{\ell _{2}}^{B^{\prime }}=\ell _{\ell
_{1}}^{B^{\prime }},
\end{equation*}

therefore%
\begin{equation}
D^{\prime }(A^{\prime }+C^{\prime })\parallel BC,
\end{equation}

but $BC\parallel D(A+C),$ then 
\begin{equation*}
D(A+C)\parallel D^{\prime }(A^{\prime }+C^{\prime }).
\end{equation*}

From here have that, the ordered quadruplet, $(O^{\prime },A^{\prime
},D^{\prime },B^{\prime })$ and $\left( C^{\prime },A^{\prime }+C^{\prime
},D^{\prime },B^{\prime }\right) $ are parallelograms:

From the Desargues axiom (see \cite{2},\cite{3}) and from the above
parallelograms, concretely from the parallelograms:%
\begin{equation*}
\left( A,A+C,\varphi \left( A+C\right) ,A^{\prime }\right) \text{\ and\ }%
\left( A,D,D^{\prime },A^{\prime }\right) ,
\end{equation*}

have that, the ordered quadruplet 
\begin{equation*}
\left( D,A+C,\varphi \left( A+C\right) ,D^{\prime }\right) -\text{is
Parallelogram}.
\end{equation*}

From here have that:%
\begin{equation*}
D(A+C)\parallel D^{\prime }\varphi \left( A+C\right)
\end{equation*}

adding the fact that

\begin{equation*}
A^{\prime }+C^{\prime }=\varphi (A)+\varphi (C)\in \ell _{2}
\end{equation*}

and 
\begin{equation*}
\varphi \left( A+C\right) \in \ell _{2},
\end{equation*}

we have that%
\begin{equation*}
\varphi \left( A+C\right) =\varphi (A)+\varphi (C).
\end{equation*}

$\mathbf{Case 2}$ We see now that stands for multiplication properties: 
\begin{equation*}
\varphi \left( A\ast C\right) =\varphi \left( A\right) \ast \varphi \left(
C\right)
\end{equation*}

During the multiplication of points $A$ and $C,$ we choose a point $I$ 
(necessarily different from the point $O$ that we have assigned in the role 
of the element 'zero'. We do this freely, as each line in affine plan having at least 2-points)
in
the role of "\textbf{one}" (see Fig.~\ref{Fig4}), the translation $\varphi $\ \ gives
us the point $I$ in point $\varphi (I)=I^{\prime },$ according to the
multiplication algorithm, for points $A,C\in \ell _{1}$, we have%
\begin{equation*}
\forall A,C\in \ell _{1},\left\{ 
\begin{array}{l}
\mathbf{Step.}1.\exists B\notin \ell _{1} \\ 
\mbox{} \\
\mathbf{Step.}2.\ell _{IB"}^{A}\cap OB=D \\ 
\mbox{} \\
\mathbf{Step.}3.\ell _{BC}^{D}\cap \ell _{1}=E%
\end{array}%
\right\} \Longleftrightarrow A\ast C=E.
\end{equation*}

From this we have parallelisms

\begin{equation*}
IB\parallel AD,BC\parallel D(A\ast C),
\end{equation*}

and $A\ast C\in \ell _{1}$ (see Fig.~\ref{Fig4}).

\begin{figure}[!ht]
\centering%
\newrgbcolor{ududff}{0.30196078431372547 0.30196078431372547 1.}
\newrgbcolor{xdxdff}{0.49019607843137253 0.49019607843137253 1.}
\newrgbcolor{yqqqyq}{0.5019607843137255 0. 0.5019607843137255}
\newrgbcolor{ffxfqq}{1. 0.4980392156862745 0.}
\psset{xunit=1.0cm,yunit=1.0cm,algebraic=true,dimen=middle,dotstyle=o,dotsize=5pt 0,linewidth=1.6pt,arrowsize=3pt 2,arrowinset=0.25}
\begin{pspicture*}(0.5,0.2)(11.5,6.5)
\psline[linewidth=2.pt](1.,1.)(10.,1.)
\psline[linewidth=2.pt,linecolor=yqqqyq](3.,1.)(3.,2.)
\psline[linewidth=2.pt,linecolor=yqqqyq](4.,1.)(4.032,2.516)
\psline[linewidth=2.pt,linecolor=ffxfqq](3.,2.)(5.,1.)
\psline[linewidth=2.pt,linecolor=ffxfqq](4.032,2.516)(7.18,1.)
\psline[linewidth=2.pt](1.78,4.48)(10.78,4.48)
\psline[linewidth=2.pt,linecolor=yqqqyq](3.78,4.48)(3.78,5.48)
\psline[linewidth=2.pt,linecolor=yqqqyq](4.78,4.48)(4.812,5.996)
\psline[linewidth=2.pt,linecolor=ffxfqq](3.78,5.48)(5.78,4.48)
\psline[linewidth=2.pt,linecolor=ffxfqq](4.812,5.996)(7.96,4.48)
\psline[linewidth=1.2pt]{->}(1.,1.)(1.78,4.48)
\psline[linewidth=1.2pt]{->}(3.,1.)(3.78,4.48)
\psline[linewidth=1.2pt]{->}(4.,1.)(4.78,4.48)
\psline[linewidth=1.2pt]{->}(5.,1.)(5.78,4.48)
\psline[linewidth=1.2pt]{->}(7.18,1.)(7.96,4.48)
\psline[linewidth=1.2pt,linestyle=dashed,dash=1pt 1pt]{->}(3.,2.)(3.78,5.48)
\psline[linewidth=1.2pt,linestyle=dashed,dash=1pt 1pt]{->}(4.032,2.516)(4.812,5.996)
\rput[tl](0.82,0.85){$O$}
\rput[tl](2.86,0.85){$I$}
\rput[tl](3.86,0.85){$A$}
\rput[tl](4.88,0.85){$C$}
\rput[tl](6.82,0.85){$A*C$}
\rput[tl](1.22,4.3){$O'$}
\rput[tl](3.88,4.3){$I'$}
\rput[tl](4.9,4.3){$A'$}
\rput[tl](5.98,4.3){$C'$}
\rput[tl](7.96,4.3){$\varphi(A*C)$}
\rput[tl](3.26,5.8){$B'$}
\rput[tl](4.36,6.3){$D'$}
\rput[tl](2.42,2.2){$B$}
\rput[tl](3.72,2.9){$D$}
\rput[tl](5.78,3.4){$\ell^{OB}$}
\rput[tl](8.16,1.4){$\ell_{1}$}
\rput[tl](9.7,4.33){$\ell_{2}=\varphi(\ell_{1})$}
\rput[tl](7.84,5.){$A'*C'=\varphi(A)*\varphi(B)$}
\rput[tl](5.56,6.3){$\ell^{O'B'}$}
\rput[tl](0.82,2.92){$\varphi$}
\psline[linewidth=2.pt](1.78,4.48)(5.716,6.448)
\psline[linewidth=2.pt](1.,1.)(5.824,3.412)
\begin{scriptsize}
\psdots[dotsize=6pt 0,dotstyle=*,linecolor=ududff](1.,1.)
\psdots[dotsize=6pt 0,dotstyle=*,linecolor=xdxdff](3.,1.)
\psdots[dotsize=6pt 0,dotstyle=*,linecolor=xdxdff](4.,1.)
\psdots[dotsize=6pt 0,dotstyle=*,linecolor=xdxdff](5.,1.)
\psdots[dotstyle=*,linecolor=gray](3.,2.)
\psdots[dotstyle=*,linecolor=gray](4.032,2.516)
\psdots[dotsize=6pt 0,dotstyle=*,linecolor=xdxdff](7.18,1.)
\psdots[dotsize=6pt 0,dotstyle=*,linecolor=ududff](1.78,4.48)
\psdots[dotsize=6pt 0,dotstyle=*,linecolor=xdxdff](3.78,4.48)
\psdots[dotsize=6pt 0,dotstyle=*,linecolor=xdxdff](4.78,4.48)
\psdots[dotsize=6pt 0,dotstyle=*,linecolor=xdxdff](5.78,4.48)
\psdots[dotstyle=*,linecolor=gray](3.78,5.48)
\psdots[dotstyle=*,linecolor=gray](4.812,5.996)
\psdots[dotsize=6pt 0,dotstyle=*,linecolor=xdxdff](7.96,4.48)
\end{scriptsize}
\end{pspicture*}
\caption{The multiplication of two points in a line on the Desargues affine plane under Translation: $ \varphi (A * C) = \varphi (A) * \varphi (C)$}
\label{Fig4}
\end{figure}
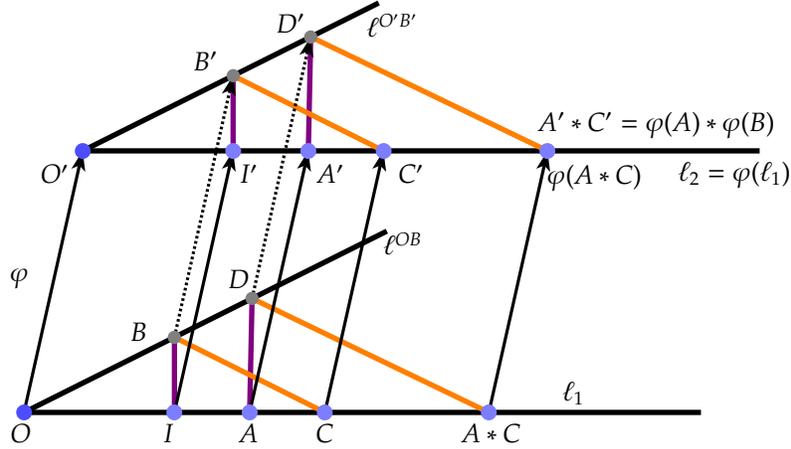

By translation properties have: 
\begin{equation*}
O^{\prime }\in \ell _{2},I^{\prime }\in \ell _{2},C^{\prime }\in \ell
_{2},A^{\prime }\in \ell _{2},\varphi \left( A\ast C\right) \in \ell _{2}
\end{equation*}

also for these points we have:%
\begin{equation*}
OO^{\prime }\parallel II^{\prime }\parallel AA^{\prime }\parallel CC^{\prime
}\parallel \left( A\ast C\right) \varphi \left( A\ast C\right) .
\end{equation*}

Calculate now $A^{\prime }\ast C^{\prime }$ according to multiplication
Algorithm (see \cite{2},\cite{3}). During the multiplication of points $A$
and $C$ we chose, as auxiliary points, the point $B\notin \ell _{1},$ here
we choose the point $B^{\prime }=\varphi \left( B\right) \notin \ell _{2}$
(we do this for ease of proof, as in \cite{2}, we have shown that the choice
of auxiliary point $B$, or in our case $\varphi (B)$ is arbitrary), by
applying the multiplication algorithm we have:%
\begin{equation*}
\forall A^{\prime }=\varphi \left( A\right) ,C^{\prime }=\varphi \left(
C\right) \in \ell _{2},\left\{ 
\begin{array}{l}
\mathbf{Step.}1.\exists B^{\prime }=\varphi \left( B\right) \notin \ell _{2}
\\ 
\mbox{} \\
\mathbf{Step.}2.\ell _{I^{\prime }B^{\prime }}^{A^{\prime }}\cap O^{\prime
}B^{\prime }=D^{\prime } \\ 
\mbox{} \\
\mathbf{Step.}3.\ell _{B^{\prime }C^{\prime }}^{D^{\prime }}\cap \ell
_{2}=E^{\prime }%
\end{array}%
\right\} \Longleftrightarrow A^{\prime }\ast C^{\prime }=E^{\prime }.
\end{equation*}

From this we have parallelisms 
\begin{equation*}
I^{\prime }B^{\prime }\parallel A^{\prime }D^{\prime },B^{\prime }C^{\prime
}\parallel D^{\prime }(A^{\prime }\ast C^{\prime }),
\end{equation*}

but from above, we have that 
\begin{equation*}
IB\parallel I^{\prime }B^{\prime },AD\parallel A^{\prime }D^{\prime
},BC\parallel B^{\prime }C^{\prime },
\end{equation*}

then 
\begin{equation*}
D^{\prime }\in \ell _{I^{\prime }B^{\prime }}^{A^{\prime }}=\ell
_{IB}^{A^{\prime }}=\ell _{AD}^{A^{\prime }},
\end{equation*}

also 
\begin{equation*}
D^{\prime }\in O^{\prime }B^{\prime }=\varphi (OB)=\ell ^{\varphi (O)\varphi
(B)}.
\end{equation*}

Then we have the ordered quadruplet \ 

\begin{equation*}
(B,D,D^{\prime },B^{\prime })-\text{is Parallelogram,}
\end{equation*}

but by the translation, we have

\begin{equation*}
(B,D,\varphi \left( D\right) ,\varphi \left( B\right) )=(B,D,\varphi \left(
D\right) ,B^{\prime })-\text{is a Parallelogram.}
\end{equation*}

So we have two similar parallelogams that have the three same points.  
Consequently, these two parallelograms will be the same, {|em i.e.},

\begin{equation*}
(B,D,D^{\prime },B^{\prime })=(B,D,\varphi \left( D\right) ,B^{\prime }).
\end{equation*}

Hence, 
\begin{equation*}
D^{\prime }=\varphi \left( D\right) .
\end{equation*}

%

Thus \ 
\begin{equation*}
A^{\prime }\ast C^{\prime }\in \ell _{2}
\end{equation*}

and \ 
\begin{equation*}
A^{\prime }\ast C^{\prime }\in \ell _{B^{\prime }C^{\prime }}^{D^{\prime
}}=\ell _{BC}^{D^{\prime }}=\ell _{D(A\ast C)}^{\varphi \left( D\right) },
\end{equation*}

implies%
\begin{equation*}
A^{\prime }\ast C^{\prime }=\ell _{D(A\ast C)}^{\varphi \left( D\right)
}\cap \ell _{2}.
\end{equation*}

From the definition and the properties of translation, we also have,%
\begin{equation*}
A\ast C\in \ell _{1}\Rightarrow \varphi \left( A\ast C\right) \in \ell _{2},
\end{equation*}

So, we have that the ordered quadruplet \ 

\begin{equation*}
(D,A\ast C,\varphi \left( A\ast C\right) ,D^{\prime })-is\text{ \ }%
Parallelogram,
\end{equation*}

and we also have (from construction of the points $A^{\prime }\ast C^{\prime }$) the
ordered quadruplet

\begin{equation*}
(D,A\ast C,A^{\prime }\ast C^{\prime },D^{\prime })=(D,A\ast C,A^{\prime
}\ast C^{\prime },\varphi \left( D\right) )-is\text{ \ }Parallelogram.
\end{equation*}

and
\begin{eqnarray*}
\varphi \left( A\ast C\right) &\in &\ell _{D(A\ast C)}^{\varphi \left(
D\right) }\Rightarrow \\
\varphi \left( A\ast C\right) &=&\ell _{D(A\ast C)}^{\varphi \left( D\right)
}\cap \ell _{2} \\
&=&A^{\prime }\ast C^{\prime } \\
&=&\varphi \left( A\right) \ast \varphi \left( C\right) .
\end{eqnarray*}

Hence, we obtain the desired result, namely,%
\begin{equation*}
\varphi \left( A\ast C\right) =\varphi \left( A\right) \ast \varphi \left(
C\right) .
\end{equation*}
\end{proof}

From Lemma~\ref{lemma:one} and Lemma~\ref{lemma:two}, we obtain the main result, namely,

\begin{Theorem}\label{thm:mainResult}
Each dilatation in a finite Desargues affine plane $\mathcal{A}_{\mathcal{D}}=(%
\mathcal{P},\mathcal{L},\mathcal{I})$ is an isomorphism's between
skew-fields constructed over isomorphic lines $\ell _{1},\ell _{2}\in 
\mathcal{L},$\ of that plane.
\end{Theorem}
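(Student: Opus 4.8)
The plan is to derive Theorem~\ref{thm:mainResult} as an immediate consequence of the two preceding lemmas together with the standard structure theory of dilatations in a Desargues affine plane. First I would recall the classification: every dilatation $\delta$ of $\mathcal{A}_{\mathcal{D}}$ is either the identity $id_{\mathcal{P}}$, or a dilatation with (exactly one) fixed point $V$, or a translation, i.e. a dilatation distinct from $id_{\mathcal{P}}$ having no fixed point. This trichotomy is precisely the dichotomy invoked in the statements of Lemma~\ref{lemma:one} and Lemma~\ref{lemma:two}, so the argument proceeds by cases on which type $\delta$ belongs to.

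In the case $\delta = id_{\mathcal{P}}$, for any line $\ell_{1}\in\mathcal{L}$ we take $\ell_{2}=\delta(\ell_{1})=\ell_{1}$, and $\delta$ restricts to the identity map on the point set of $\ell_{1}$; the identity is trivially an isomorphism of the skew-field $\mathbf{K}_{1}=(\ell_{1},+,\ast)$ onto itself, since $\delta(A+C)=A+C=\delta(A)+\delta(C)$ and $\delta(A\ast C)=A\ast C=\delta(A)\ast\delta(C)$ hold identically. In the case where $\delta$ has a fixed point $V$, Proposition~\ref{prop:bijectionOfLines} gives that $\delta$ carries each line $\ell_{1}$ bijectively onto the parallel line $\ell_{2}=\delta(\ell_{1})$, and Lemma~\ref{lemma:one} asserts exactly that this bijection is a skew-field isomorphism $\mathbf{K}_{1}\longrightarrow\mathbf{K}_{2}$ — so nothing further is needed. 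In the remaining case, $\delta$ is a translation distinct from the identity with no fixed point, and Lemma~\ref{lemma:two} likewise gives that $\delta\colon\ell_{1}\longrightarrow\ell_{2}=\delta(\ell_{1})$ is a skew-field isomorphism. Since the three cases are exhaustive, the conclusion holds for every dilatation $\delta$, which is the assertion of the theorem.

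I would then remark that the finiteness hypothesis on $\mathcal{A}_{\mathcal{D}}$ is used only through the hypotheses of Lemmas~\ref{lemma:one} and~\ref{lemma:two} (where it guarantees, via Tecklenburg's Theorem~\ref{thm:Tecklenburg}, that the ambient plane is Pappian and hence that the constructed structures are genuine skew-fields with the expected behaviour under the addition and multiplication algorithms); no independent use of finiteness enters at the level of assembling the cases. The only mild subtlety — and the step I would flag as the place a reader might want more detail — is verifying that the case trichotomy is genuinely exhaustive, i.e. that a dilatation cannot have two distinct fixed points unless it is the identity; this is the classical fact that a dilatation is determined by its action on two points, from which it follows that a dilatation fixing two distinct points fixes every point on the line through them and, by taking a non-collinear third point, fixes all of $\mathcal{P}$. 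Granting this standard fact, the proof is a one-line case split invoking the two lemmas.

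\begin{proof}
Let $\delta$ be any dilatation of $\mathcal{A}_{\mathcal{D}}$. A dilatation is determined by its action on two distinct points, so if $\delta$ fixes two distinct points then it fixes the line through them and, choosing a third non-collinear point (which exists by Axiom $3^o$), it fixes all of $\mathcal{P}$; hence exactly one of the following holds: $\delta = id_{\mathcal{P}}$; or $\delta$ has a unique fixed point $V$; or $\delta$ has no fixed point and $\delta\neq id_{\mathcal{P}}$, i.e. $\delta$ is a translation.

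If $\delta = id_{\mathcal{P}}$, then for every $\ell_{1}\in\mathcal{L}$ we have $\ell_{2}:=\delta(\ell_{1})=\ell_{1}$, and $\delta$ restricted to the points of $\ell_{1}$ is the identity map on $\mathbf{K}_{1}=(\ell_{1},+,\ast)$, which is trivially a skew-field isomorphism $\mathbf{K}_{1}\longrightarrow\mathbf{K}_{2}=\mathbf{K}_{1}$.

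If $\delta$ has a fixed point $V$, then by Prop.~\ref{prop:bijectionOfLines} $\delta$ maps each line $\ell_{1}$ bijectively onto the parallel line $\ell_{2}=\delta(\ell_{1})$, and by Lemma~\ref{lemma:one} this bijection is an isomorphism between the skew-fields $\mathbf{K}_{1}$ and $\mathbf{K}_{2}$ constructed over $\ell_{1}$ and $\ell_{2}$.

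If $\delta$ is a translation different from $id_{\mathcal{P}}$, then again by Prop.~\ref{prop:bijectionOfLines} $\delta\colon\ell_{1}\longrightarrow\ell_{2}=\delta(\ell_{1})$ is a bijection, and by Lemma~\ref{lemma:two} it is an isomorphism between the skew-fields $\mathbf{K}_{1}$ and $\mathbf{K}_{2}$.

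Since the three cases are exhaustive, every dilatation of $\mathcal{A}_{\mathcal{D}}$ is an isomorphism between the skew-fields constructed over the isomorphic lines $\ell_{1}$ and $\ell_{2}=\delta(\ell_{1})$, as claimed.
\end{proof}
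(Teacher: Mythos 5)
Your proposal is correct and follows essentially the same route as the paper, which obtains Theorem~\ref{thm:mainResult} directly as the combination of Lemma~\ref{lemma:one} (dilatation with a fixed point) and Lemma~\ref{lemma:two} (translation). Your additional care in handling the identity case and in justifying that the trichotomy of dilatations is exhaustive only makes explicit what the paper leaves implicit.
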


\begin{Theorem}
Each dilatation in a finite Desargues affine plane $\mathcal{A}_{\mathcal{D}}=(%
\mathcal{P},\mathcal{L},\mathcal{I})$ occurs in a Pappian space.
\end{Theorem}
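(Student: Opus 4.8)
The plan is to derive this statement as an essentially immediate corollary of Tecklenburg's theorem (Theorem~\ref{thm:Tecklenburg}) together with the main result just established (Theorem~\ref{thm:mainResult}). The key observation is that a dilatation $\delta$ --- whether it has a fixed point (Lemma~\ref{lemma:one}) or is a fixed-point-free translation (Lemma~\ref{lemma:two}) --- is defined entirely inside the finite Desargues affine plane $\mathcal{A}_{\mathcal{D}}=(\mathcal{P},\mathcal{L},\mathcal{I})$: it sends each line $\ell_1\in\mathcal{L}$ to a parallel line $\ell_2=\delta(\ell_1)\in\mathcal{L}$ of the same plane, and by Prop.~\ref{prop:bijectionOfLines} it is a bijection of the point sets of those lines. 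Hence, to show that $\delta$ \emph{occurs in a Pappian space}, it suffices to show that the ambient space $(\mathcal{P},\mathcal{L})$ is itself Pappian.

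First I would make precise what \emph{Pappian space} means, namely that the Pappus configuration axiom stated just before Theorem~\ref{thm:Tecklenburg} holds in $(\mathcal{P},\mathcal{L})$, and then invoke Theorem~\ref{thm:Tecklenburg}: since $\mathcal{A}_{\mathcal{D}}$ is a finite Desarguesian affine plane, it is Pappian. This already yields the conclusion, because $\delta$ is a transformation of exactly this space; there is nothing further to verify on the geometric side.

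Second, to tie the statement to the algebraic content of the paper, I would record the skew-field side. By Theorem~\ref{thm:mainResult}, $\delta$ is an isomorphism $\mathbf{K}_1=(\ell_1,+,\ast)\longrightarrow\mathbf{K}_2=(\ell_2,+,\ast)$ between the skew-fields constructed over $\ell_1$ and $\ell_2$. Because the plane is finite, each $\ell_i$ is a finite point set, so $\mathbf{K}_1$ and $\mathbf{K}_2$ are finite skew-fields; by Wedderburn's little theorem (cited in the excerpt via~\cite{Wedderburn1905DesarguesianAffinePlane}) every finite skew-field is commutative, so $\mathbf{K}_1$ and $\mathbf{K}_2$ are in fact fields with $\mathbf{K}_1\cong\mathbf{K}_2$. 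Coordinatizing $\mathcal{A}_{\mathcal{D}}$ over such a commutative field is precisely what realizes the Pappus property (the Hessenberg--Tecklenburg circle of ideas), so $\delta$ is exhibited as a field isomorphism operating inside the coordinate Pappian plane over $\mathbf{K}_1$.

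I do not expect a genuine obstacle: the statement is a repackaging of facts already in hand. The only point needing care is bookkeeping about where finiteness is used --- it enters twice, once through Theorem~\ref{thm:Tecklenburg} for the geometric Pappus property and once through Wedderburn's theorem for the commutativity of the coordinate skew-field --- and then the argument closes in one line by combining Theorem~\ref{thm:Tecklenburg} with Theorem~\ref{thm:mainResult}.
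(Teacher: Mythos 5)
Your proposal is correct and follows essentially the same route as the paper, whose entire proof is ``Immediate from Theorem~\ref{thm:Tecklenburg}'': the finite Desargues affine plane is Pappian by Tecklenburg's theorem, so any dilatation of it occurs in a Pappian space. Your additional remarks via Theorem~\ref{thm:mainResult} and Wedderburn's theorem are consistent embellishments but are not needed for, nor different from, the paper's one-line argument.
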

\begin{proof}
Immediate from Theorem~\ref{thm:Tecklenburg}.
\end{proof}

\bibliographystyle{amsplain}
\bibliography{NSrefs}

\providecommand{\bysame}{\leavevmode\hbox to3em{\hrulefill}\thinspace}
\providecommand{\MR}{\relax\ifhmode\unskip\space\fi MR }
\providecommand{\MRhref}[2]{%
  \href{http://www.ams.org/mathscinet-getitem?mr=#1}{#2}
}
\providecommand{\href}[2]{#2}
\begin{thebibliography}{10}

\bibitem{ADAleksandrov1956nonEuclideanGeometry}
A.D. Aleksandrov, \emph{Non-euclidean geometry}, Mathematics. {I}ts Contents,
  Methods and Meaning (M.A.~Lavrent'ev A.D.~Aleksandrov, A.N.~Kolmogorov, ed.),
  Dover Pubs, Inc., Mineola, NY, 1956,1999, Trans. from the Russian edition by
  S.H. Gould, pp.~97--192.

\bibitem{PSAleksandrov1956affineGeometry}
P.S. Aleksandrov, \emph{Topology}, Mathematics. {I}ts Contents, Methods and
  Meaning (M.A.~Lavrent'ev A.D.~Aleksandrov, A.N.~Kolmogorov, ed.), Dover Pubs,
  Inc., Mineola, NY, 1956,1999, Trans. from the Russian edition by S.H. Gould,
  pp.~193--226.

\bibitem{Berger2010geometryRevealed}
M.~Berger, \emph{Geometry revealed}, Springer, Heidelberg, 2010, {x}vi+831 pp.,
  ISBN: 978-3-540-70996-1, MR2724440.

\bibitem{7}
H.~S.~M. Coxeter, \emph{Introduction to geometry, 2nd ed.}, John Wiley \& Sons,
  Inc., New York-London-Sydney, 1969, {x}vii+469 pp., MR0123930, MR0346644.

\bibitem{DolgachevShirokov1995affineMap}
I.V. Dolgachev and A.P. Shirokov, \emph{Affine space}, Encyclopedia of
  Mathematics (M.~Hazewinkel, ed.), Kluwer, Dordrecht, 1995, pp.~62--63.

\bibitem{3}
K.~Filipi, O.~Zaka, and A.~Jusufi, \emph{The construction of a corp in the set
  of points in a line of desargues affine plane}, Matematicki Bilten
  \textbf{43} (2019), no.~01, 1--23, ISSN 0351-336X (print), ISSN 1857--9914
  (online).

\bibitem{Hilbert1959geometry}
D.~Hilbert, \emph{The foundations of geometry}, The Open Court Publishing Co.,
  La Salle, Ill., 1959, {v}ii+143 pp., MR0116216.

\bibitem{Kryftis2015thesis}
A.~Kryftis, \emph{A constructive approach to affine and projective planes},
  Ph.D. thesis, University of Cambridge, Trinity College and Department of Pure
  Mathematics and Mathematical Statistics, 2015, supervisor: M. Hyland,
  v+170pp.,arXiv 1601.04998v1 19 Jan. 2016.

\bibitem{Wedderburn1905DesarguesianAffinePlane}
J.H. Maclagan-Wedderburn, \emph{A theorem on finite algebras}, Trans. Amer.
  Math. Soc. \textbf{6} (1905), no.~3, 349--352.

\bibitem{NollSchaffer1977affineIsomorphisms}
W.~Noll and J.J. Sch\"{a}ffer, \emph{Order-isomorphisms in affine spaces},
  Annali di Matematica Pura ed Applicata \textbf{117} (1978), no.~4, 243--262,
  MR0515964.

\bibitem{Peters2016AMSJphysicalGeometry}
J.F. Peters, \emph{Two forms of proximal, physical geometry. {A}xioms, sewing
  regions together, classes of regions, duality and parallel fibre bundles},
  Advan. in Math: Sci. J \textbf{5} (2016), no.~2, 241--268, Zbl 1384.54015,
  reviewed by D. Leseberg, Berlin.

\bibitem{Peters2018JMSMvortexNerves}
\bysame, \emph{Proximal vortex cycles and vortex nerve structures.
  non-concentric, nesting, possibly overlapping homology cell complexes},
  Journal of Mathematical Sciences and Modelling \textbf{1} (2018), no.~2,
  56--72, ISSN 2636-8692, \url{https://dx.doi.org/10.33187/jmsm.425066}, See,
  also, \url{https://arxiv.org/abs/1805.03998}.

\bibitem{Pickert1973PlayfairAxiom}
G.~Pickert, \emph{Affine planes: {A}n example of research on geometric
  structures}, The Mathematical Gazette \textbf{57} (2004), no.~402, 278--291,
  MR0474017.

\bibitem{Prazmowska2004DemoMathDesparguesAxiom}
M.~Pra\.{z}mowska, \emph{A proof of the projective {D}esargues axiom in the
  {D}esarguesian affine plane}, Demonstratio Mathematica \textbf{37} (2004),
  no.~4, 921--924, MR2103894.

\bibitem{Szmielew1981DesarguesAxiom}
W.~Szmielew, \emph{Od geometrii afinicznej do euklidesowej (polish) [from
  affine geometry to euclidean geometry] rozwa?ania nad aksjomatyk? [an
  approach through axiomatics]}, Biblioteka Matematyczna [Mathematics Library],
  Warsaw, 1981, 172 pp., ISBN: 83-01-01374-5, MR0664205.

\bibitem{Wedderburn1987PappusDesarguesianAffinePlanes}
H.~Tecklenburg, \emph{A proof of the theorem of {P}appus in finite
  {D}esarguesian affine planes}, Journal of Geometry \textbf{30} (1987),
  173--181.

\bibitem{8}
O.~Zaka, \emph{Contribution to reports of some algebraic structures with affine
  plane geometry and applications}, Ph.D. thesis, Polytechnic University of
  Tirana,Tirana, Albania, Department of Mathematical Engineering, 2016,
  supervisor: K. Filipi, vii+113pp.

\bibitem{4}
\bysame, \emph{A description of collineations-groups of an affine plane},
  Libertas Mathematica (N.S.) \textbf{37} (2017), no.~2, 81--96, ISSN print:
  0278 -- 5307, ISSN online: 2182 -- 567X, MR3828328.

\bibitem{5}
\bysame, \emph{Three vertex and parallelograms in the affine plane: Similarity
  and addition abelian groups of similarly $n$-vertexes in the {D}esargues
  affine plane}, Mathematical Modelling and Applications \textbf{3} (2018),
  no.~1, 9--15, \url{http://doi:10.11648/j.mma.20180301.12}.

\bibitem{1}
\bysame, \emph{Dilations of line in itself as the automorphism of the
  skew-field constructed over in the same line in desargues affine plane},
  Applied Mathematical Sciences \textbf{13} (2019), no.~5, 231--237.

\bibitem{2}
O.~Zaka and K.~Filipi, \emph{The transform of a line of {D}esargues affine
  plane in an additive group of its points}, Int. J. of Current Research
  \textbf{8} (2016), no.~07, 34983--34990.

\end{thebibliography}

\end{document}